\documentclass[11pt]{article}

\usepackage{amsfonts}
\usepackage{mathrsfs}
\usepackage{amsmath}
\usepackage{amsthm}
\usepackage{amssymb}
\usepackage{graphicx}
\usepackage{latexsym}
\usepackage{indentfirst}
\usepackage{authblk}
\usepackage[pagewise]{lineno}

\numberwithin{equation}{section}

\newtheorem{thm}{Theorem}[section]
\newtheorem{lema}[thm]{Lemma}

\newtheorem{rmk}[thm]{Remarks}
\newtheorem{cor}[thm]{Corollary}
\newtheorem{definition}[thm]{Definition}
\newtheorem{theorem}[thm]{Theorem}

\title{\Large\bf Global solutions to an initial-boundary value problem 
	for a model of convection driven by surface tension}

\author{\sc  Wending Wu\thanks{Corresponding author. E-mail address: wuwending@mail.bnu.edu.cn} and Xiaojing Xu\\
	{\footnotesize \it Laboratory of Mathematics and Complex Systems (Ministry of Education),}\\
	{\footnotesize \it School of Mathematical Sciences, Beijing Normal University, Beijing 100875, China.}}

\date{}

\begin{document}

\maketitle
\noindent {\bf Abstract.} This paper establishes the global existence of non-negative weak solutions to a two-dimensional, fourth-order nonlinear degenerate parabolic equation modeling surface-tension-driven convection in thin fluid films. First, we construct a regularized approximate problem and prove its solvability via the Galerkin method. Utilizing energy and entropy functionals alongside a singular entropy condition $1/h_0 \in L^1(\Omega)$, we secure uniform a priori bounds for higher-order spatial and time derivatives. These bounds enable the use of the Aubin-Lions lemma and Gagliardo-Nirenberg inequalities to achieve strong compactness and essential $L^6$-integrability. Furthermore, we adopt the Alber-Zhu framework to rigorously define higher-order local weak derivatives and pass to the limit. Finally, we prove the limit function is non-negative, confirming it as a global weak solution to the original problem.
\\[0.2cm]
\noindent{\bf Keywords.} Fourth-order degenerate parabolic equation; Convection; Thin-film equation; Global existence; Non-negativity; Entropy.
\\[0.2cm]
\noindent{\bf AMS subject classifications.} 35K35, 35K65, 35A01, 76A20, 76E06

\section{Introduction}
Pattern formation and self-organization in non-equilibrium systems are observed in fluid dynamics. Thermal convection, specifically Marangoni-Benard convection driven by surface tension gradients, provides a framework for studying these dynamics. In thin fluid films, microgravity environments, or at free fluid interfaces, flows driven by surface tension dominate bulk buoyancy effects. The driving force acts at the deformable free surface, coupling hydrodynamic stresses with thermal fields. This nonlinear coupling leads to long wave instabilities and surface deformations, which are relevant in applications such as coating processes, microfluidics, and evaporative cooling, where film rupture presents a failure mode.

To investigate these dynamics and rupture phenomena, the three dimensional Navier Stokes equations coupled with heat transfer are reduced via long wave asymptotic methods. The resulting evolution equation is a high order nonlinear degenerate partial differential equation. A model describing the evolution of the film thickness $h$ for a thin liquid layer heated from below was derived by Davis \cite{davis} and discussed in \cite{colinet2001nonlinear}:
\begin{align}\label{laiyuan}
	h_t = -\nabla\cdot\left( \frac{1}{3Ca}h^3\nabla\Delta h - \frac{Ga}{3}h^3\nabla h + \frac{Ma}{2}h^2\nabla h \right),
\end{align}
where $t>0$ denotes time and $x \in \mathbb{R}^2$ represents the spatial coordinates. The positive parameters $Ma$, $Ga$, and $Ca$ correspond to the Marangoni, Galileo, and capillary numbers, respectively. Equation \eqref{laiyuan} captures interfacial dynamics including the formation and coarsening of dry spots \cite{vanhook, vanhook2}.

The equation \eqref{laiyuan} falls within a broader class of fourth-order degenerate parabolic equations of the form:
\begin{align}\label{tuozhan}
	h_t = -\nabla\cdot\left( \theta(h)\nabla\Delta h + \psi(h)\nabla h \right).
\end{align}
Here, the mobility term $\theta(h)$ represents surface tension effects, while $\psi(h)$ accounts for additional forces such as gravity or thermocapillary effects \cite{cankao5,hou12,hou14}.

The analysis of \eqref{tuozhan} presents difficulties due to the lack of a maximum principle and the degeneracy of $\theta$ as $h \to 0$. In one spatial dimension, Bernis and Friedman \cite{cankao4} established the existence of nonnegative weak solutions for the case $\psi \equiv 0$ with $\theta=h^n$. Subsequent studies explored the qualitative behavior of these solutions \cite{cankao3}, self similar source type solutions \cite{cankao5}, and their finite speed of propagation \cite{hou5}. Bertozzi and Pugh \cite{cankao2} incorporated a nonnegative lower order term $\psi \ge 0$, proving the existence of nonnegative weak solutions under assumptions on the ratio $\psi/\theta$.

In multi dimensional settings, the analysis is intricate. For the case $\psi \equiv 0$, Dal Passo et al. \cite{hou7} established the existence of nonnegative solutions using energy and entropy estimates. Gr\"un \cite{cankao6,cankao7} demonstrated existence and finite speed of propagation utilizing generalized Bernis type interpolation inequalities. For models encompassing both the higher order term and the second order nonlinear diffusion term where $\theta=|h|^n$ and $\psi=|h|^m$, existence and propagation properties were investigated by several authors \cite{hou8, cankao1}. These works focus on constructing nonnegative global generalized solutions in unbounded domains or periodic settings.

To fully verify the validity of this model in predicting surface-tension-driven convection and film rupture, rigorous analytical investigations are as essential as numerical simulations. In our previous work \cite{wu}, we established the existence of global weak solutions to the one-dimensional Davis equation under no-flux boundary conditions. However, a rigorous analytical framework for the full two-dimensional case, which is crucial for capturing more realistic physical dynamics, remains incomplete. Motivated by this, the present article aims to extend the existence theory to the two-dimensional setting. Specifically, we investigate the global-in-time existence and non-negativity of weak solutions for the initial-boundary value problem under no-flux conditions.

To formulate this problem mathematically, we simplify the notation by introducing positive constants $\alpha_{1} = \frac{1}{3Ca}$, $\alpha_{2} = \frac{Ga}{3}$, and $\alpha_{3} = \frac{Ma}{2}$. Assume that $\Omega \subset \mathbb{R}^2$ is an open bounded domain with a sufficiently smooth boundary $\partial\Omega$. We denote the space-time cylinder by $Q_{T} := (0,T) \times \Omega$, where $T > 0$ is a given final time, and let $n$ be the unit outward normal vector on $\partial\Omega$. The two-dimensional Davis equation \eqref{laiyuan} can then be rewritten as:
\begin{align}\label{1}
	h_t = -\nabla\cdot\left(\alpha_{1} h^3\nabla\Delta h -\alpha_{2} h^3 \nabla h +\alpha_{3} h^2 \nabla h\right), \qquad \text{in } Q_{T}.
\end{align}
We impose the following initial and boundary conditions
\begin{align} \label{2}
	\frac{\partial h}{\partial n}&=0, \qquad \text{on } (0,T)\times\partial\Omega,
	\\ \label{2.b}
	\alpha_{1} h^3 \frac{\partial \Delta h}{\partial n} -\alpha_{2} h^3 \frac{\partial h}{\partial n} +\alpha_{3} h^2 \frac{\partial h}{\partial n}&=0, \qquad \text{on } (0,T)\times\partial\Omega,
	\\ \label{3}
	h(0,x)&= h_{0}(x), \qquad \text{in } \Omega.
\end{align}
\\
\textbf{Statement of the main result.} Before stating our main results, we introduce essential notations and definitions used throughout this article.
\\
\textbf{Notations.} We denote the $L^2(\Omega)$ norm by $\|\cdot\|$. For a multi index $\alpha$, the weak spatial derivative of order $|\alpha|$ is denoted by $D_x^{\alpha}h$, and $\nabla$ denotes the first order spatial gradient. We employ standard notations for Sobolev and Lebesgue spaces such as $W^{k,p}(\Omega)$ and $L^p(0,T;X)$. The space $W^{-1,\frac{4}{3}}(\Omega)$ denotes the dual space of $W^{1,4}(\Omega)$. For any measurable spatial or space-time domain $Z$ such as $\Omega$, $Q_T$ or their subsets, we denote the $L^2(Z)$ inner product for scalar functions $v_1, v_2$ and vector-valued functions $\mathbf{v}_1, \mathbf{v}_2$ respectively as
$$
(v_1,v_2)_Z = \int_{Z} v_1 v_2, \quad (\mathbf{v}_1,\mathbf{v}_2)_Z = \int_{Z} \mathbf{v}_1 \cdot \mathbf{v}_2,
$$
where all the integrations are performed with respect to the Lebesgue measure. For a space time function $v$ defined on $Q_{T}$, $v(t)$ represents the mapping $x \mapsto v(t,x)$, and we write $v=v(t)$.

For any given $h \in L^2(0,T;H^2(\Omega))$, we define the non-zero sets as:
\begin{align*}
	\mathcal{A}^h &:= \left\{(t,x) \in Q_{T} \mid |h(t,x)|>0\right\},
	\\
	\mathcal{A}^h(t) &:= \{x \in \Omega \mid (t,x) \in \mathcal{A}^h\},\quad t \in (0,T).
\end{align*}
By the standard Sobolev embedding theorem in two dimensions, $h(t) \in H^2(\Omega)$ is spatially continuous for almost every $t$, implying that the spatial slice $\mathcal{A}^h(t)$ is an open set. However, global space-time continuity for $h$ in $Q_T$ is not guaranteed, meaning the space-time set $\mathcal{A}^h$ is not necessarily open. Consequently, standard weak derivatives defined via integration by parts over non-open domains do not mathematically make sense. To overcome this, we utilize a notion of local weak derivatives proposed by Alber and Zhu \cite{alber2008solutions}.

\begin{definition}\label{ruodaoshu}
	Let $\mathcal{A}\subset Q_{T}$ be a domain such that $\mathcal{A}(t)$ is open for almost every $t\in(0,T)$, and let $\alpha$ be a multi-index. A function $g:\mathcal{A}\rightarrow \mathbb{R}$ is called the $\alpha$-th local weak $L^2$-derivative of $h\in L^2(Q_{T})$ with respect to $x$ in $\mathcal{A}$ if:
	\begin{enumerate}
		\item For almost every $t \in (0,T)$, the function $g(t)$ belongs to $L^{2}_{\mathrm{loc}}(\mathcal{A}(t))$ and satisfies
		\begin{align*}
			g(t) = D_x^\alpha h(t)\big|_{\mathcal{A}(t)}
		\end{align*}
		in the usual weak sense;
		\item There exists a sequence $\{\mathcal{A}_n\}_{n=1}^{\infty}$ of measurable subsets $\mathcal{A}_n \subset \mathcal{A}$ with $g\big|_{\mathcal{A}_n}\in L^2(\mathcal{A}_n)$ for all $n\in \mathbb{N}$,
		such that
		\begin{align*}
			\left|\mathcal{A} \setminus \bigcup_{n=1}^\infty \mathcal{A}_n\right| = 0,
		\end{align*}
	    where $|\cdot|$ denotes the Lebesgue measure.
	\end{enumerate}
\end{definition}

Based on this local derivative framework, we are now in a position to rigorously define the global weak solution to the degenerate initial-boundary value problem \eqref{1}-\eqref{3}.

\begin{definition}\label{dingyi1}
	Let $h_0\in L^2(\Omega)$. A function $h=h(t,x)$, which satisfies
	\begin{align}\label{d1}
		h\in L^2(0,T;H^2(\Omega))\cap L^{6}(Q_{T}), \quad \frac{\partial h}{\partial n}=0 \text{ \ a.e. on } \partial \Omega,
	\end{align}
	is a weak solution to the problem \eqref{1}-\eqref{3} if $h$ has the local weak derivative $\nabla \Delta h$ on $\mathcal{A}^h$ in the sense of Definition \ref{ruodaoshu} with $h^3\nabla \Delta h\in L^1(\mathcal{A}^{h})$ and
	\begin{align}\label{d2}
		&(h,\varphi_t)_{Q_{T}}+\alpha_{1} ( h^3 \nabla \Delta h,\nabla \varphi)_{\mathcal{A}^{h}} 
		\nonumber\\
		&=\alpha_{2} (h^3 \nabla h, \nabla \varphi)_{Q_{T}} - \alpha_{3} (h^2 \nabla h, \nabla \varphi)_{Q_{T}}-(h_0,\varphi(0))_{\Omega}
	\end{align}
	holds for all $\varphi\in C_0^\infty((-\infty,T)\times\mathbb{R})$.
\end{definition}

The main result of this article is stated as follows:
\begin{theorem}\label{jieguo}
	Assume that $h_0$ satisfies
	\begin{align}\label{jiashe}
		h_0\in H^1(\Omega), \quad 1/h_0\in L^1(\Omega) \quad \text{and} \quad h_0\geq 0 \ \text{ a.e. in } \Omega.
	\end{align}
	Then, for any given $T>0$, there exists a weak solution $h$ to the problem \eqref{1}-\eqref{3} in the sense of Definition \ref{dingyi1}, such that
	\begin{align}\label{2.8}
		&h\in L^\infty(0,T;H^1(\Omega)) \cap L^2(0,T;H^2(\Omega)),
		\\\label{111}
		&1/{h}\in L^\infty(0,T;L^1(\Omega)),
		\\\label{112}
		&h(t,x) > 0 \quad \text{for a.e. } x \in \Omega, \text{ for all } t \in [0,T].
		\\ \label{2.9}
		&h_t\in L^{\frac{4}{3}}(0,T;W^{-1,\frac{4}{3}}(\Omega)),
		\\ \label{2.10}
		&h^3\nabla \Delta h\in L^{\frac{4}{3}}(Q_{T}),
	\end{align}
    where we set $h^3\nabla \Delta h=0$ on $Q_{T} \setminus \mathcal{A}^h$.
\end{theorem}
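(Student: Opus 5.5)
We follow the abstract: regularize, obtain uniform estimates from energy and entropy functionals, and pass to the limit in the Alber--Zhu sense. For $\varepsilon>0$ we replace the degenerate coefficients by bounded, strictly positive ones, e.g. $\theta_\varepsilon(s)=\alpha_1(|s|^3+\varepsilon)$ for the mobility, with truncated and smoothed versions of $\alpha_2 s^3$ and $\alpha_3 s^2$. We also replace $h_0$ by smooth approximations $h_{0\varepsilon}$ with $h_{0\varepsilon}\ge \varepsilon^{\gamma}$ (for suitable $\gamma>0$), $h_{0\varepsilon}\to h_0$ in $H^1(\Omega)$, and $\int 1/h_{0\varepsilon}$ bounded. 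The regularized problem with Neumann conditions \eqref{2}, \eqref{2.b} is solved by a Galerkin scheme in the Neumann eigenfunctions of $-\Delta$. The local Galerkin ODE solution is continued globally using the $\varepsilon$-dependent energy estimate, and one passes to the Galerkin limit at fixed $\varepsilon$ to obtain $h_\varepsilon \in L^\infty(0,T;H^1)\cap L^2(0,T;H^3)$.

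\textbf{Energy estimate.} Test with $-\Delta h_\varepsilon$. This gives
\begin{align*}
\frac12\frac{d}{dt}\|\nabla h_\varepsilon\|^2+\int\theta_\varepsilon(h_\varepsilon)|\nabla\Delta h_\varepsilon|^2=\int(\alpha_2 h_\varepsilon^3-\alpha_3 h_\varepsilon^2)\nabla h_\varepsilon\cdot\nabla\Delta h_\varepsilon ,
\end{align*}
and Young's inequality splits the right-hand side into $\frac12\int h_\varepsilon^3|\nabla\Delta h_\varepsilon|^2$ plus $C\int (|h_\varepsilon|^3+|h_\varepsilon|)|\nabla h_\varepsilon|^2$. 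The mass $\int h_\varepsilon$ is conserved. In 2D, Gagliardo--Nirenberg controls $\int |h|^3|\nabla h|^2$ by $\|h\|_{L^\infty}^3\|\nabla h\|^2\lesssim \|h\|_{H^1}^{3}\|\nabla h\|^2$ up to logarithmic corrections, so Gronwall alone is not enough. We therefore also need the entropy estimate.

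\textbf{Entropy estimate.} Use $G_\varepsilon''=1/\theta_\varepsilon$, which for $\theta=h^3$ gives $G\sim 1/(2h)$. Testing with $G_\varepsilon'(h_\varepsilon)$ yields
\begin{align*}
\frac{d}{dt}\int G_\varepsilon(h_\varepsilon)+\alpha_1\|\Delta h_\varepsilon\|^2=\int\frac{\alpha_2h^3-\alpha_3h^2}{\theta_\varepsilon}|\nabla h_\varepsilon|^2 ,
\end{align*}
where the right-hand side is bounded by $C\|\nabla h_\varepsilon\|^2$. Together with the energy bound this gives $h_\varepsilon\in L^2(0,T;H^2)$ and $\sup_t\int 1/h_\varepsilon\le C$. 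This entropy dissipation $\|\Delta h\|^2$ is what closes the energy estimate: we bound $\int h^3|\nabla h|^2\le \|h\|_{L^\infty}^3\|\nabla h\|^2$ and use $\|h\|_{L^\infty}^2\lesssim \|h\|_{L^1}^{2}+\|h\|\,\|h\|_{H^2}$ in 2D. The resulting quantity is a $\|\Delta h\|^2$-weighted Gronwall term, integrable in time.

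\textbf{Main obstacle.} The hardest step is to close these estimates uniformly in $\varepsilon$ with the destabilizing $\alpha_3 h^2\nabla h$ and $\alpha_2$ terms. I would first try combining energy and entropy as $E+\lambda\int G$ and using $\|\nabla h\|^2\le \|h-\bar h\|\,\|\Delta h\|$, which yields a differential inequality $y'\le C(1+\|\Delta h\|^2)y$ with $\int_0^T\|\Delta h\|^2$ already controlled.

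From the estimates we obtain $h_\varepsilon$ bounded in $L^\infty(0,T;H^1)\cap L^2(0,T;H^2)$, hence in $L^6(Q_T)$ and, by Gagliardo--Nirenberg, $h_\varepsilon^2 \in L^4(0,T;L^\infty)$ type bounds. The flux satisfies $\theta_\varepsilon\nabla\Delta h_\varepsilon=\theta_\varepsilon^{1/2}\cdot\theta_\varepsilon^{1/2}\nabla\Delta h_\varepsilon$, with the second factor bounded in $L^2(Q_T)$ and $h_\varepsilon^{3/2}\in L^4(Q_T)$ from $L^6$. This gives \eqref{2.10} and, via the equation, \eqref{2.9}. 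Aubin--Lions then yields $h_\varepsilon\to h$ strongly in $L^2(0,T;H^1)\cap C([0,T];L^2)$ and a.e. Fatou gives \eqref{111}, and $1/h(t)\in L^1$ gives \eqref{112}. To identify the flux limit on $\mathcal A^h$, we follow Alber--Zhu: take $\mathcal A_n=\{|h|>1/n\}$ intersected with sets of uniform convergence (Egorov). On $\mathcal A_n$ the bound on $\int\theta_\varepsilon|\nabla\Delta h_\varepsilon|^2$ gives $\nabla\Delta h_\varepsilon$ bounded in $L^2$, so a weak limit exists. It coincides with the spatial weak derivative slicewise, since $h_\varepsilon(t)\to h(t)$ in $H^2$ weakly. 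The flux on $Q_T\setminus\mathcal A^h$ vanishes because $\theta_\varepsilon^{1/2}\to0$ strongly there. Passing to the limit in the weak formulation then gives \eqref{d2}.
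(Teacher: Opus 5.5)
\textbf{The entropy estimate has a gap.} Your bound on its right-hand side is false, and everything downstream depends on it. Testing with $G_\varepsilon'(h_\varepsilon)$, where $G_\varepsilon''=1/\theta_\varepsilon$, actually gives
\[
\frac{d}{dt}\int_\Omega G_\varepsilon(h_\varepsilon)\,dx+\|\Delta h_\varepsilon\|^2=\int_\Omega\frac{\alpha_3h_\varepsilon^2-\alpha_2h_\varepsilon^3}{\theta_\varepsilon(h_\varepsilon)}|\nabla h_\varepsilon|^2\,dx .
\]
So the $\alpha_2$ part has the good sign where $h_\varepsilon>0$. The $\alpha_3$ weight $\alpha_3s^2/(\alpha_1(|s|^3+\varepsilon))$ is of size $\varepsilon^{-1/3}$ at $s\approx\varepsilon^{1/3}$ and tends to $\alpha_3/(\alpha_1 s)$. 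The right-hand side is therefore essentially $\frac{\alpha_3}{\alpha_1}\int|\nabla h_\varepsilon|^2/h_\varepsilon$, which is \emph{not} bounded by $C\|\nabla h_\varepsilon\|^2$ uniformly in $\varepsilon$.

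This singular Marangoni term is the real difficulty, and your proposed remedy does not touch the singular weight. That remedy is combining $E+\lambda\int G$ and using $\|\nabla h\|^2\le\|h-\bar h\|\,\|\Delta h\|$. Without a fix you have neither $\int_0^T\|\Delta h_\varepsilon\|^2\le C$ nor $\sup_t\int G_\varepsilon(h_\varepsilon)\le C$. Hence your energy Gronwall does not close, and you get no $L^2(0,T;H^2)$ or $L^6$ bounds and no \eqref{111}--\eqref{112}.

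\textbf{The missing idea.} The paper closes Lemma~\ref{entropy} by integrating the singular term by parts through a regularized logarithm. With $L_\kappa'(s)=1/|s|_\kappa$ and the Neumann condition,
\[
\int\frac{|\nabla h|^2}{|h|_\kappa}=\int\nabla h\cdot\nabla L_\kappa(h)=-\int\Delta h\,L_\kappa(h).
\]
Young's inequality then absorbs $\frac{\alpha_1}{2}\|\Delta h\|^2$. The remainder $C\iint|L_\kappa(h)|^2$ is bounded by $\iint G_\kappa(h)+C$, because $\ln^2$ growth is dominated by the entropy's $1/|s|_\kappa$ growth near $0$ and its linear growth at infinity. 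Gronwall on $\int G_\kappa(h)$ then yields the entropy bound on its own, independently of any energy bound. In your regularization the analogue is $\Lambda_\varepsilon'(s)=\alpha_3s^2/\theta_\varepsilon(s)$; near $0$, $|\Lambda_\varepsilon|\sim|\ln\varepsilon|$ while $G_\varepsilon(0)\sim\varepsilon^{-1/3}$, so the same domination holds.

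\textbf{The rest of your route, once this is fixed.} Your energy step is then a legitimate alternative to the paper's exact Lyapunov functional $F_\kappa$, whose choice $\Psi_\kappa''=\alpha_2-\alpha_3/|h|_\kappa$ makes the dissipation the exact square $\int|J|^2/|h|_\kappa^3$. Your step tests with $-\Delta h_\varepsilon$ and uses mass conservation together with the 2D interpolation $\|h\|_{L^\infty}^3\lesssim\|h\|_{L^1}\|D_x^2h\|^2+\|h\|_{L^1}^3$. It is this $L^1$-based form that makes $y'\le C(1+\|D_x^2h\|^2)y$ linear. Your limit passage is essentially the paper's: Aubin--Lions, Egorov sets inside $\{|h|>1/n\}$, Alber--Zhu identification of $\nabla\Delta h$, vanishing flux on $\{h=0\}$, and Fatou on the entropy.
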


\begin{rmk}
	Despite the lack of a maximum principle for fourth-order operators, the non-negativity of $h$ is preserved by the degeneracy of $h^3$ at $h=0$ combined with \eqref{111}.
\end{rmk}

\begin{rmk}
	The condition $1/h_0 \in L^1(\Omega)$ in \eqref{jiashe} does not yield a uniform positive lower bound for $h$. Instead, it provides \eqref{111}, ensuring the strict positivity stated in \eqref{112}.
\end{rmk}

\begin{rmk}
	In Definition \ref{dingyi1}, the integral of $h^3 \nabla \Delta h$ is restricted to $\mathcal{A}^h$. In Theorem \ref{jieguo}, The result \eqref{112} implies the set $Q_{T} \setminus \mathcal{A}^h$ is of Lebesgue measure zero,  which shows that \eqref{2.10} is reasonable.
\end{rmk}

\begin{rmk}
	The uniqueness of the weak solution remains open. The regularity in \eqref{2.8} is insufficient for standard $L^2$-contraction or energy difference estimates on the degenerate term $\nabla \cdot (h^3\nabla\Delta h)$.
\end{rmk}

We now outline the primary mathematical difficulties encountered in proving Theorem \ref{jieguo} and present our analytical strategies:

\textit{First}, the continuous Sobolev embedding $H^1(\Omega) \hookrightarrow L^\infty(\Omega)$ fails in 2D. To pass to the limit in the nonlinear convective terms without imposing restrictive $L^\infty$-assumptions, we leverage the initial condition $1/h_0 \in L^1(\Omega)$ to establish a priori $L^2(Q_T)$-bounds for $D_x^2 h^{\kappa}$. Interpolating this with $h^{\kappa} \in L^\infty(0,T;H^1(\Omega))$ and $h_t^{\kappa} \in L^{\frac{4}{3}}(0,T;W^{-1,\frac{4}{3}}(\Omega))$ via the Aubin-Lions lemma and Gagliardo-Nirenberg inequalities yields a uniform $L^6(Q_T)$-bound. This $L^6$-integrability is precisely sufficient to handle lower-order terms such as $h^3 \nabla h$ and $h^2 \nabla h$.

\textit{Second}, the strong degeneracy of the principal coefficient $h^3$ as $h \to 0$ invalidates global integration by parts, since the positivity set $\mathcal{A}^h$ is not necessarily open in $Q_T$. To rigorously define the highest-order spatial derivatives on non-open domains, we adopt the local weak derivative framework introduced in Definition \ref{ruodaoshu} strictly over $\mathcal{A}^h$.

\textit{Third}, identifying the weak limit of the degenerate term $|h^\kappa|_\kappa^3 \nabla \Delta h^\kappa$ is nontrivial because $\nabla \Delta h$ may blow up as $h \to 0$. We employ a domain-decomposition strategy: on $\mathcal{A}^h$, we apply \textbf{Egorov's theorem} to construct an exhaustion sequence of measurable subsets $\mathcal{A}_n$ with uniform positive lower bounds, securing the local weak limit. On the singular set $\{h=0\}$, we combine H\"older's inequality with the strong $L^6(Q_T)$-convergence of $h^\kappa$ and pass a truncation parameter $\delta \to 0$ to verify that the weak limit vanishes almost everywhere.

\textit{Finally}, fourth-order operators inherently lack a maximum principle. Instead of standard contradiction arguments \cite{cankao4}, we establish strict positivity via a \textbf{double Fatou argument}. By shifting the singular entropy functional to ensure its non-negativity before applying Fatou's lemma spatially, and subsequently extracting a temporal subsequence via the strong continuity $h \in C([0,T]; L^2(\Omega))$, we deduce $h(t,x) > 0$ a.e. for all $t \in [0,T]$. Consequently, the set $\{h=0\}$ has Lebesgue measure zero, which elevates the local weak limit over $\mathcal{A}^h$ to a global one over $Q_T$.

\section{Existence of solutions to the approximate problem}
In this section, we construct an approximate problem of the initial-boundary value problem \eqref{1}-\eqref{3} and establish the existence of its weak solutions via the Galerkin method.

The approximate problem is constructed as follows
\begin{align}\label{a1}
	h_t +\nabla\cdot \left(\alpha_{1} |h|_{\kappa}^3 \nabla \Delta h -\alpha_{2} |h|_{\kappa}^3 \nabla h +\alpha_{3} |h|_{\kappa}^2 \nabla h\right)&=0, \qquad \qquad {\rm in} \,\, Q_{T},
\end{align}
with the initial data and boundary conditions
\begin{align}\label{a2}
	\frac{\partial h}{\partial n}&=0, \qquad \text{on } (0,T)\times\partial\Omega,
	\\ \label{a2b}
	\alpha_{1} |h|_{\kappa}^3 \frac{\partial \Delta h}{\partial n} -\alpha_{2} |h|_{\kappa}^3 \frac{\partial h}{\partial n} +\alpha_{3} |h|_{\kappa}^2 \frac{\partial h}{\partial n}&=0, \qquad \text{on } (0,T)\times\partial\Omega,
	\\ \label{a3}
	h(0,x)&= h_{0}(x), \qquad \text{in } \Omega.
\end{align}
Here 
\begin{align*}
	|h|_{\kappa}:=\sqrt{|h|^2+\kappa^2}
\end{align*}
and $\kappa\in(0,1]$. This definition of $|h|_{\kappa}$ is reasonable because as we will consider the limit as $\kappa\rightarrow 0$, it helps to regularize the original degenerate problem \eqref{1}-\eqref{3}.

We now construct the Galerkin approximate solutions. Let $\{\omega_{i}\}_{i=1}^\infty$ be a complete set of eigenfunctions of the Neumann Laplacian satisfying
\begin{align*}
	-\Delta \omega_{i} &= \lambda_{i} \omega_{i}, \qquad {\rm in} \,\, \Omega,
	\\
	\frac{\partial \omega_{i}}{\partial n} &= 0,  \qquad\quad {\rm on} \,\, \partial\Omega,
\end{align*}
and normalized such that $(\omega_{i},\omega_{j}) = \delta_{ij}$, where $\delta_{ij}$ is the Kronecker delta. For $m \in \mathbb{N_+}$, we define the $m$-th order approximate solution as
\begin{align*}
	h^m(t,x) = \sum_{i=1}^{m}g_{im}(t)\omega_{i}(x),
\end{align*}
where $g_{im}(t) \in \mathbb{R}$ are functions of $t$ to be determined. The initial condition $h_0$ is approximated by the projection
\begin{align*}
	h_0^m(x) = \sum_{i=1}^{m}g_{im}(0)\omega_{i}(x),
\end{align*}
such that $\|h_0^m-h_0\|_{H^1(\Omega)}\rightarrow 0$ as $m \rightarrow \infty$.

By projecting the equation \eqref{a1} onto the finite-dimensional space spanned by $\{\omega_1, \dots, \omega_m\}$, we obtain a system of ordinary differential equations as follows
\begin{align*}
	(h_t^m, \omega_{j}) - \left(\alpha_{1} |h^m|_{\kappa}^3 \nabla \Delta h^m -\alpha_{2} |h^m|_{\kappa}^3 \nabla h^m +\alpha_{3} |h^m|_{\kappa}^2 \nabla h^m, \nabla\omega_{j}\right) = 0, \quad 1 \leq j \leq m.
\end{align*}
Substituting $h^m$ and utilizing $-\Delta \omega_k = \lambda_k \omega_k$, determining $g_{im}(t)$ reduces to solving the following Cauchy problem
\begin{align*}
	\frac{d}{dt}g_{jm}(t) &= F_{j}(g_{1m},\cdots,g_{mm},t),
	\nonumber\\
	g_{jm}(0) &= (h_0, \omega_{j}),
\end{align*}
where $F_j$ is given by
\begin{align*}
	F_{j}(g_{1m},\cdots,g_{mm},t) = &-\alpha_1\sum_{k=1}^{m}\lambda_kg_{km}\int_{\Omega}\left|\sum_{i=1}^{m}g_{im}\omega_i\right|_{\kappa}^3 \nabla\omega_k\cdot\nabla\omega_{j} dx
	\nonumber\\
	&-\alpha_2\sum_{l=1}^{m}g_{lm}\int_{\Omega}\left|\sum_{i=1}^{m}g_{im}\omega_i\right|_{\kappa}^3 \nabla\omega_l\cdot\nabla\omega_{j} dx 
	\nonumber\\
	&+\alpha_3\sum_{l=1}^{m}g_{lm}\int_{\Omega}\left|\sum_{i=1}^{m}g_{im}\omega_i\right|_{\kappa}^2 \nabla\omega_l\cdot\nabla\omega_{j} dx.
\end{align*}
Since $F_j$ is continuous and locally Lipschitz with respect to $g_{im}$, the Picard-Lindelöf theorem for ODEs guarantees the existence of a unique local classical solution $g_{im}(t)$ on an interval $[0, t_m]$.

To prove the global existence of solutions to problem \eqref{a1}-\eqref{a3}, one needs to obtain \textit{a priori} estimates that are uniform with respect to $m$ and valid for all $t \in [0,T]$, as established in the following lemmas. 

\begin{lema}\label{jinsixianyan1}
	There exists a constant $C_\kappa$ independent of $m$, such that the following estimates hold:
	\begin{align}
		\left\| h^m \right\|_{L^\infty (0,T; H^1(\Omega))} &\leq C_\kappa, 
		\\
		\left\| D_x^{2} h^m \right\|_{L^2(Q_T)} &\leq C_\kappa, 
		\\\label{houxuguji}
		\left\| |h^m|_{\kappa}^{\frac{3}{2}} \nabla \Delta h^m \right\|_{L^{2}(Q_T)} &\leq C_\kappa, 
		\\
		\left\| |h^m|_{\kappa}^3 \nabla \Delta h^m \right\|_{L^{\frac{4}{3}}(Q_T)} &\leq C_\kappa.
	\end{align}
\end{lema}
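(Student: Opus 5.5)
The plan is the standard energy estimate for the Galerkin system, obtained by testing with $-\Delta h^m$. This test function is admissible, since $-\Delta h^m=\sum_j \lambda_j g_{jm}\omega_j$ lies in the span of $\omega_1,\dots,\omega_m$. Multiply the $j$-th Galerkin equation by $\lambda_j g_{jm}(t)$ and sum over $j$. Using $\partial_n h^m=0$, we have $(h^m_t,-\Delta h^m)=\frac12\frac{d}{dt}\|\nabla h^m\|^2$, and $\nabla(-\Delta h^m)=-\nabla\Delta h^m$. This gives
\begin{align*}
\frac12\frac{d}{dt}\|\nabla h^m\|^2+\alpha_1\int_\Omega |h^m|_\kappa^3|\nabla\Delta h^m|^2
=\int_\Omega\left(\alpha_2|h^m|_\kappa^3-\alpha_3|h^m|_\kappa^2\right)\nabla h^m\cdot\nabla\Delta h^m .
\end{align*}
Mass is also conserved, since testing with $\omega_1=$ const gives $\int_\Omega h^m=\int_\Omega h_0^m$. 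Hence Poincar\'e--Wirtinger controls $\|h^m\|_{H^1}$ by $\|\nabla h^m\|$ plus a constant.

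Next I treat the right-hand side by Young's inequality:
\[
\left|\int |h^m|_\kappa^{3}\nabla h^m\cdot\nabla\Delta h^m\right|\le \varepsilon\int|h^m|_\kappa^3|\nabla\Delta h^m|^2+C_\varepsilon\int |h^m|_\kappa^3|\nabla h^m|^2 .
\]
The $|h^m|_\kappa^2$ term is handled the same way, using $|h|_\kappa^{4}/|h|_\kappa^{3}=|h|_\kappa\le |h|+1$. So I must bound $\int (|h^m|+1)^3|\nabla h^m|^2$. \textbf{This is the main obstacle}: in 2D $H^1\not\hookrightarrow L^\infty$, and the term is superlinear.

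To close it, I would not use the dissipation weight directly. Instead I would exploit the lower bound $|h|_\kappa\ge\kappa$, which gives
\[
\int|h^m|_\kappa^3|\nabla\Delta h^m|^2\ge\kappa^3\|\nabla\Delta h^m\|^2 .
\]
The estimate is therefore genuinely $\kappa$-dependent, which the lemma allows. Then I bound
\[
\int|h^m|^3|\nabla h^m|^2\le\|h^m\|_{L^{\infty}}^3\|\nabla h^m\|^2 .
\]
By Gagliardo--Nirenberg in 2D, $\|h\|_{L^\infty}\le C\|h\|_{H^1}^{1-\theta}\|h\|_{H^3}^{\theta}$ with a small $\theta$; interpolating between $H^1$ and $H^3$ gives $\theta=1/2$ up to a log. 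Control of $\|h\|_{H^3}$ by $\|\nabla\Delta h\|+\|h\|_{H^1}$ comes from Neumann elliptic regularity on the eigenfunction span. This yields a power of $\|\nabla\Delta h^m\|$ that is $<2$ times a power of $\|h^m\|_{H^1}$. Young's inequality then absorbs it into $\tfrac{\kappa^3\alpha_1}{2}\|\nabla\Delta h^m\|^2$, leaving
\[
\frac{d}{dt}y\le C_\kappa(1+y^{p})
\]
for $y=\|\nabla h^m\|^2$ and some $p>1$. This is not global, so I will be more careful. Interpolating instead as $\|\nabla h\|_{L^4}^2\le C\|\nabla h\|\,\|h\|_{H^2}$ and $\|h\|_{L^{12}}^3$ via Ladyzhenskaya-type bounds, I aim for an exponent making the inequality linear Gronwall. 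Failing that, I would use the additional available energy: the Lyapunov structure $E(h)=\frac{\alpha_1}{2}|\nabla h|^2+\dots$ of the genuine gradient flow. The weights $|h|_\kappa^3$ and $|h|_\kappa^2$ differ, though, so I would fall back on the $\kappa$-absorption. Gronwall then yields the bound on $\sup_t\|h^m\|_{H^1}$ and on $\int_{Q_T}|h^m|_\kappa^3|\nabla\Delta h^m|^2$, which is \eqref{houxuguji}. This also shows $t_m$ can be extended to $T$.

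For the remaining bounds:
\begin{itemize}
\item $\|D_x^2h^m\|_{L^2(Q_T)}$: integrate by parts,
\[
\|\Delta h^m\|^2=-\int\nabla h^m\cdot\nabla\Delta h^m\le\|\nabla h^m\|\,\|\nabla\Delta h^m\| ,
\]
and use $\kappa^{-3}$ times \eqref{houxuguji}. Elliptic regularity with Neumann data then gives full $D_x^2$.
\item The $L^{4/3}$ bound: by H\"older,
\[
\left\||h^m|_\kappa^3\nabla\Delta h^m\right\|_{L^{4/3}}\le\left\||h^m|_\kappa^{3/2}\right\|_{L^4(Q_T)}\left\||h^m|_\kappa^{3/2}\nabla\Delta h^m\right\|_{L^2} .
\]
This needs $h^m\in L^6(Q_T)$, which follows from $L^\infty(H^1)\subset L^\infty(L^6)$ in 2D.
\end{itemize}
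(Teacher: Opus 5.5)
You have a genuine gap at the core step. Testing with $-\Delta h^m$ is admissible, but it forces you to control $\int_\Omega(|h^m|+1)^3|\nabla h^m|^2$ with no prior bound on $\|h^m\|_{H^1}$, and no interpolation can make the resulting differential inequality linear. The leading part $\int_\Omega|h|^3|\nabla h|^2$ is homogeneous of degree five in $h$. Any bound of the form $C\|h\|_{H^1}^a\|h\|_{H^3}^b$ therefore has $a+b=5$. Absorbing $\|h\|_{H^3}^b$ ($b<2$) into $\frac{\alpha_1\kappa^3}{2}\|\nabla\Delta h\|^2$ leaves $\|h\|_{H^1}^{2(5-b)/(2-b)}$, and $2(5-b)/(2-b)\ge 5$. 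So you always end with $y'\le C_\kappa(1+y^p)$ with $p\ge 5/2$; your $L^4$/Ladyzhenskaya variant gives $p=3$. Such an inequality only yields bounds up to a data-dependent blow-up time. Hence the sentence ``Gronwall then yields the bound on $\sup_t\|h^m\|_{H^1}$'' is unjustified. Your bounds for $D_x^2h^m$ and for $|h^m|_\kappa^3\nabla\Delta h^m$ in $L^{4/3}(Q_T)$ are built on it, so none of the four estimates is obtained on an arbitrary interval $[0,T]$.

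The missing idea is the Lyapunov functional behind the paper's argument. The paper omits this proof and refers to Section~3, where it uses
$F_\kappa[h]=\int_\Omega\bigl(\frac{\alpha_1}{2}|\nabla h|^2+\Psi_\kappa(h)\bigr)\,dx$ with $\Psi_\kappa''(y)=\alpha_2-\alpha_3/|y|_\kappa$, for instance $\Psi_\kappa'(y)=\alpha_2y-\alpha_3\ln(y+|y|_\kappa)$.

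You discarded this structure because the weights $|h|_\kappa^3$ and $|h|_\kappa^2$ differ, but the term $\alpha_3/|y|_\kappa$ absorbs exactly that mismatch. The flux $J:=\alpha_1|h|_\kappa^3\nabla\Delta h-\alpha_2|h|_\kappa^3\nabla h+\alpha_3|h|_\kappa^2\nabla h$ equals $|h|_\kappa^3\nabla\bigl(\alpha_1\Delta h-\Psi_\kappa'(h)\bigr)$. So the regularized equation is an exact gradient flow with mobility $|h|_\kappa^3$, and $\frac{d}{dt}F_\kappa=-\int_\Omega|J|^2/|h|_\kappa^3\,dx\le 0$.

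For fixed $\kappa$ one has $\Psi_\kappa(y)\ge\frac{\alpha_2}{4}y^2-C_\kappa$, which gives $\sup_t\|h^m\|_{H^1}\le C_\kappa$ globally in time. With that bound, your own $\kappa$-absorption closes linearly, because now $\int_\Omega(|h|+1)^3|\nabla h|^2\le C_\kappa(1+\|h\|_{H^3}^{3\theta})$ with $3\theta<2$. Your integration-by-parts argument for $D_x^2h^m$ and your H\"older argument for the $L^{4/3}$ bound are then fine. The paper instead obtains $D_x^2h$ from the entropy $G_\kappa$, which is only needed for $\kappa$-uniformity.

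Your admissibility concern is nevertheless legitimate, and the paper does not address it: $\Psi_\kappa'(h^m)\notin\mathrm{span}\{\omega_1,\dots,\omega_m\}$. To get the energy identity at the Galerkin level you have two options:
\begin{enumerate}
\item Use a mixed scheme whose flux is $-|h^m|_\kappa^3\nabla w^m$, with $w^m=-\alpha_1\Delta h^m+P_m\Psi_\kappa'(h^m)$ and $P_m$ the $L^2$-projection onto that span.
\item Estimate the commutator term $\bigl(J,\nabla(I-P_m)\Psi_\kappa'(h^m)\bigr)$ directly.
\end{enumerate}
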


\begin{lema}\label{jinsixianyan2}
	There exists a constant $C_\kappa$ independent of $m$, such that the following estimate holds:
	\begin{align}
		\left\| h_t^m \right\|_{L^{\frac{4}{3}}(0,T; W^{-1,\frac{4}{3}}(\Omega))} \leq C_\kappa.
	\end{align}
\end{lema}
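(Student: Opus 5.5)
The plan is to bound $h^m_t$ by duality through the Galerkin identity, using the flux bounds of Lemma \ref{jinsixianyan1}. Write the Galerkin equation as $(h^m_t,\omega_j)=(J^m,\nabla\omega_j)$ for $1\le j\le m$, with flux
$$
J^m:=\alpha_1|h^m|_\kappa^3\nabla\Delta h^m-\alpha_2|h^m|_\kappa^3\nabla h^m+\alpha_3|h^m|_\kappa^2\nabla h^m .
$$
Since $h^m_t$ lies in $\mathrm{span}\{\omega_1,\dots,\omega_m\}$, for any $\varphi\in W^{1,4}(\Omega)$ we get
$$
\langle h^m_t,\varphi\rangle=(h^m_t,P_m\varphi)=(J^m,\nabla P_m\varphi),
$$
where $P_m$ is the $L^2$-orthogonal projection onto the first $m$ Neumann eigenfunctions. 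Hence $\|h^m_t(t)\|_{W^{-1,4/3}}\le C\|J^m(t)\|_{L^{4/3}(\Omega)}$, provided $\|\nabla P_m\varphi\|_{L^4}\le C\|\varphi\|_{W^{1,4}}$ uniformly in $m$. Raising to the power $4/3$ and integrating in time reduces the lemma to a uniform bound $\|J^m\|_{L^{4/3}(Q_T)}\le C_\kappa$.

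\textbf{Step 1 (flux bound).} The principal part is already bounded in $L^{4/3}(Q_T)$ by the last estimate of Lemma \ref{jinsixianyan1}. For $|h^m|_\kappa^3\nabla h^m$, I would use $|h^m|_\kappa\le|h^m|+1$ and the two-dimensional embedding $H^1(\Omega)\hookrightarrow L^q(\Omega)$ for every $q<\infty$. Together with $\|h^m\|_{L^\infty(H^1)}\le C_\kappa$, this bounds $|h^m|_\kappa^3$ in $L^\infty(0,T;L^{4}(\Omega))$ and $\nabla h^m$ in $L^\infty(0,T;L^2(\Omega))$. H\"older then gives $|h^m|_\kappa^3\nabla h^m\in L^\infty(0,T;L^{4/3}(\Omega))\subset L^{4/3}(Q_T)$. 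The term $|h^m|_\kappa^2\nabla h^m$ is handled identically, even more easily. The $D_x^2h^m$ bound is not needed here.

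\textbf{Step 2 (projection; main obstacle).} The delicate point is the uniform $W^{1,4}$-stability of $P_m$. The projection is uniformly bounded on $H^1$, and on $H^2$ with the Neumann condition, because it commutes with $-\Delta$. In $L^p$ with $p\neq2$, however, spectral projections are in general \emph{not} uniformly bounded. I would therefore first try the following route. Use density of smooth functions with $\partial_n\varphi=0$, and estimate $\|\nabla P_m\varphi\|_{L^4}$ by interpolating the $H^1$ and $H^2$ bounds through Gagliardo--Nirenberg. This works if we accept the dual norm of $H^{3/2}$ (which embeds in $W^{1,4}$) on the test side.

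If that degrades the norm, the fallback is to read the $W^{-1,4/3}$ norm of $h^m_t$ as the supremum over test functions $\varphi\in\mathrm{span}\{\omega_1,\dots,\omega_m\}$ with $\|\varphi\|_{W^{1,4}}\le1$. For such $\varphi$ we have $P_m\varphi=\varphi$, and the estimate follows directly from Step 1:
$$
\int_0^T\sup_{\varphi}|(J^m,\nabla\varphi)|^{4/3}\,dt\le\|J^m\|_{L^{4/3}(Q_T)}^{4/3}\le C_\kappa .
$$
This restricted bound is exactly what the Aubin--Lions argument (in Simon's or Dubinskii's form, combined with the $H^1$ bound) needs to extract strong compactness as $m\to\infty$. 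The full $W^{-1,4/3}$ bound is then inherited by the limit through weak lower semicontinuity.
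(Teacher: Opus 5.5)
You are right that the Galerkin identity only holds against $V_m=\mathrm{span}\{\omega_1,\dots,\omega_m\}$. So the $L^{\frac{4}{3}}$--$L^4$ duality of Section 3, which the paper says carries over, does not transfer verbatim: it needs $\|\nabla P_m\varphi\|_{L^4}\le C\|\varphi\|_{W^{1,4}}$ uniformly in $m$, and that is not available.

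However, neither of your two routes proves the lemma as stated.
\begin{itemize}
\item Route 1 controls $\|\nabla P_m\varphi\|_{L^4}$ by an $H^{3/2}$- or $H^2$-type norm of $\varphi$. It therefore bounds $h^m_t$ only in the dual of that smaller test space. This is strictly weaker than $W^{-1,\frac{4}{3}}(\Omega)$, because $W^{1,4}(\Omega)\not\hookrightarrow H^{3/2}(\Omega)$. It might suffice for compactness, but it is not the stated estimate.
\item Route 2 replaces the norm by the seminorm $\sup\{(h^m_t,\varphi):\varphi\in V_m,\ \|\varphi\|_{W^{1,4}}\le 1\}$. This is comparable to the true $W^{-1,\frac{4}{3}}$ norm only if $P_m$ is uniformly bounded on $W^{1,4}$, which is exactly the point in doubt. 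This bound also does not feed into Lemma \ref{aubin} as used in Theorem \ref{globalexistence}, since that lemma needs $h^m_t$ bounded in a fixed space $B_1$; you would have to switch to a time-translate (Simon-type) argument.
\item Your closing sentence bounds only the limit's $h_t$. The lemma asserts a bound on $h^m_t$ uniformly in $m$.
\end{itemize}

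The missing idea is to do the duality in $L^2$, where $P_m$ is harmless. Since $P_m$ is a spectral projection, $\|P_m\varphi\|_{H^1}\le\|\varphi\|_{H^1}$. The Galerkin identity then gives $\|h^m_t(t)\|_{(H^1(\Omega))'}\le\|J^m(t)\|_{L^2(\Omega)}$. Because $W^{1,4}(\Omega)\hookrightarrow H^1(\Omega)$ on a bounded domain, $\|h^m_t\|_{W^{-1,\frac{4}{3}}}\le C\|h^m_t\|_{(H^1(\Omega))'}$.

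So it suffices to bound $J^m$ in $L^{\frac{4}{3}}(0,T;L^2(\Omega))$ rather than in $L^{\frac{4}{3}}(Q_T)$. This is possible precisely because $\kappa$ is fixed:
\begin{itemize}
\item \eqref{houxuguji} together with $|h^m|_\kappa\ge\kappa$ gives $\|\nabla\Delta h^m\|_{L^2(Q_T)}\le C_\kappa$.
\item Neumann elliptic regularity, using that $\Delta h^m$ has zero mean, gives $\|h^m\|_{H^3}\le C(\|h^m\|_{H^1}+\|\nabla\Delta h^m\|)$.
\item Interpolation and the embedding $H^{4/3}(\Omega)\hookrightarrow L^\infty(\Omega)$ give $\|h^m\|_{L^\infty}\le C\|h^m\|_{H^1}^{5/6}\|h^m\|_{H^3}^{1/6}$.
\end{itemize}
Combined with the $L^\infty(0,T;H^1)$ bound, these yield
\[
\|J^m(t)\|_{L^2(\Omega)}\le C\bigl(1+\|h^m(t)\|_{L^\infty}\bigr)^3\bigl(\|\nabla\Delta h^m(t)\|+\|\nabla h^m(t)\|\bigr)\le C_\kappa\bigl(1+\|\nabla\Delta h^m(t)\|\bigr)^{3/2}.
\]
The $\frac{4}{3}$-th power of the right-hand side is integrable on $(0,T)$ uniformly in $m$. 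This gives the lemma with the genuine $W^{-1,\frac{4}{3}}$ norm, and Lemma \ref{aubin} then applies exactly as in the proof of Theorem \ref{globalexistence}.
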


\noindent For the sake of brevity, we omit the proofs of Lemma \ref{jinsixianyan1} and Lemma \ref{jinsixianyan2}, as they are analogous to the corresponding results in Section 3.

\begin{lema}[Aubin-Lions]\label{aubin}
	Let $B_0$, $B$, and $B_1$ be Banach spaces such that $B_0$ and $B_1$ are reflexive, $B_0$ is compactly embedded into $B$, and $B$ is embedded into $B_1$. For $1\leq p_0, p_1 \leq +\infty$, define the space
	$$
	W=\left\{ f \ \middle| \ f\in L^{p_0}(0,T;B_0), \, \frac{\partial f}{\partial t}\in L^{p_1}(0,T;B_1) \right\}.
	$$
	$\mathrm{(1)}$ If $p_0<+\infty$, then the embedding of $W$ into $L^{p_0}(0,T;B)$ is compact. \\
	$\mathrm{(2)}$ If $p_0=+\infty$ and $p_1>1$, then the embedding of $W$ into $C([0,T];B)$ is compact.
\end{lema}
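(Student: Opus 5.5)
The plan is the classical route via Ehrling's interpolation inequality combined with a compactness criterion in Bochner spaces. Throughout, let $F\subset W$ be bounded, i.e. $\|f\|_{L^{p_0}(0,T;B_0)}+\|f_t\|_{L^{p_1}(0,T;B_1)}\le M$ for all $f\in F$.

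\textbf{Step 1 (Ehrling's lemma).} First I will show that for every $\eta>0$ there is $C_\eta$ with
\begin{align*}
\|v\|_{B}\le \eta\|v\|_{B_0}+C_\eta\|v\|_{B_1}\qquad \text{for all } v\in B_0 .
\end{align*}
The proof is by contradiction. Suppose there are $v_k$ with $\|v_k\|_{B_0}=1$ and $\|v_k\|_B>\eta+k\|v_k\|_{B_1}$. Compactness of $B_0\hookrightarrow B$ gives a subsequence with $v_k\to v$ in $B$, and $\|v\|_B\ge\eta$. On the other hand $\|v_k\|_{B_1}\le \|v_k\|_B/k\to 0$, and since $B\hookrightarrow B_1$ continuously this forces $v=0$, a contradiction.

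The consequence is that it suffices to prove relative compactness of $F$ in the weaker space ($L^{p_0}(0,T;B_1)$, respectively $C([0,T];B_1)$). Indeed, applying Ehrling's inequality pointwise in $t$ and integrating gives
\begin{align*}
\|f-g\|_{L^{p_0}(B)}\le 2\eta M+C_\eta\|f-g\|_{L^{p_0}(B_1)},
\end{align*}
and the analogous bound with the sup norm in case (2). A sequence that is Cauchy in the $B_1$-valued norm is therefore Cauchy in the $B$-valued norm.

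\textbf{Step 2 (case (2), $p_0=\infty$, $p_1>1$).} From $f(t)-f(s)=\int_s^t f_\tau\,d\tau$ in $B_1$ and H\"older's inequality,
\begin{align*}
\|f(t)-f(s)\|_{B_1}\le M|t-s|^{1-1/p_1}.
\end{align*}
So $F$ is uniformly H\"older continuous into $B_1$; here $p_1>1$ is used.

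Moreover, $\|f(t)\|_{B_0}\le M$ for \emph{every} $t$, not just almost every $t$. This follows from reflexivity of $B_0$: a bounded sequence $f(t_k)$ with $t_k\to t$ has a weakly convergent subsequence in $B_0$. Its weak limit must coincide with the $B_1$-limit $f(t)$, and the norm bound passes to the limit by weak lower semicontinuity.

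Hence for each $t$ the set $\{f(t): f\in F\}$ is bounded in $B_0$ and so relatively compact in $B$. Ehrling's inequality turns the $B_1$-equicontinuity into $B$-equicontinuity, and Arzel\`a--Ascoli in $C([0,T];B)$ concludes.

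\textbf{Step 3 (case (1), $p_0<\infty$).} I would use Simon's criterion: $F$ is relatively compact in $L^{p_0}(0,T;B_1)$ provided two conditions hold.

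(a) The averages $\int_{t_1}^{t_2}f\,dt$ form a relatively compact set in $B_1$. This is immediate, since they are bounded in $B_0$ by $|t_2-t_1|^{1-1/p_0}M$, and $B_0$ embeds compactly into $B$, hence into $B_1$.

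(b) The translates satisfy $\|f(\cdot+s)-f\|_{L^{p_0}(0,T-s;B_1)}\to0$ uniformly in $f\in F$ as $s\to0$. The argument splits into two cases.
\begin{itemize}
\item If $p_0\le p_1$: the bound $\|f(t+s)-f(t)\|_{B_1}\le\int_t^{t+s}\|f_\tau\|_{B_1}\,d\tau$ together with Young's inequality gives a bound of order $s\,M$ in $L^{p_1}$, hence in $L^{p_0}$.
\item If $p_0>p_1$: I instead interpolate. The translates are small in $L^1(B_1)$, of order $sM$, and bounded in $L^{p_0}(B)\hookrightarrow L^{p_0}(B_1)$ by $2M$. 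To get uniform smallness in $L^{p_0}(B_1)$, I apply Ehrling's inequality again and truncate in time.
\end{itemize}
Combining (a), (b) and Step 1 then yields compactness in $L^{p_0}(0,T;B)$.

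\textbf{Main obstacle.} I expect the delicate point to be (b) when $p_1=1$ and $p_0$ is large, because there $f_t$ gives only $L^1$ control of the translates. If the interpolation becomes messy, my fallback is the following. Extract a subsequence converging weakly (weak-$*$ if $p_0=\infty$ is ever needed) in $L^{p_0}(B_0)$; this uses reflexivity of $B_0$. Subtract the limit, and show that the mollified-in-time differences converge strongly in $L^{p_0}(B)$. The mollification error is controlled uniformly through $\|f_t\|_{L^1(B_1)}$ combined with Ehrling's inequality.
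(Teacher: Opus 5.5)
The paper gives no proof of this lemma; it only cites Aubin, Lions and Simon. Your outline follows the standard literature route: Ehrling's inequality, Simon's translation criterion for (1), and Arzel\`a--Ascoli for (2). Steps 1 and 2 are correct, including the reflexivity argument that $\|f(t)\|_{B_0}\le M$ for every $t$. Step 3(a) and the case $p_0\le p_1$ of 3(b) are also correct.

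\textbf{The gap is Step 3(b) for $p_0>p_1$.} This is exactly the case the paper uses ($p_0=2$, $p_1=\frac{4}{3}$). Knowing only that the translates are small in $L^1(0,T-s;B_1)$ and bounded in $L^{p_0}(0,T-s;B_1)$ cannot give smallness in $L^{p_0}$, because interpolation only reaches $L^q$ with $q<p_0$. For example, take a fixed $v\in B_0\setminus\{0\}$, a bump $\phi$ supported in $(0,1)$, and $g_\lambda(t)=\lambda^{1/p_0}\phi(\lambda t)v$. Then $g_\lambda$ is bounded in $L^{p_0}(0,T;B_0)$ and tends to $0$ in $L^1(0,T;B_1)$, but its $L^{p_0}(0,T;B_1)$-norm is constant.

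Ehrling's inequality cannot repair this. It bounds $\|\cdot\|_B$ in terms of $\|\cdot\|_{B_0}$ and $\|\cdot\|_{B_1}$, so it gives no new control of a $B_1$-norm. Truncating in time does not help either, since the obstruction is concentration in time. Your mollification fallback runs into the same problem: the mollification error is controlled through $\|f_t\|_{L^1(B_1)}$ only in an $L^1$ sense unless you add the bound below.

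\textbf{The missing observation} is that $f_t\in L^{p_1}(0,T;B_1)\subset L^1(0,T;B_1)$ bounds the translates uniformly in $t$, not just in mean. For every $t$,
\begin{align*}
\|f(t+s)-f(t)\|_{B_1}\le\int_t^{t+s}\|f_\tau\|_{B_1}\,d\tau\le\|f_t\|_{L^1(0,T;B_1)}\le T^{1-1/p_1}M=:M_1,
\end{align*}
and therefore
\begin{align*}
\int_0^{T-s}\|f(t+s)-f(t)\|_{B_1}^{p_0}\,dt\le M_1^{p_0-1}\int_0^{T-s}\|f(t+s)-f(t)\|_{B_1}\,dt\le s\,M_1^{p_0}.
\end{align*}
Thus $\|\tau_sf-f\|_{L^{p_0}(0,T-s;B_1)}\le s^{1/p_0}M_1$ uniformly on $F$, for all $1\le p_1\le\infty$ and $p_0<\infty$. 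This removes the need to split into $p_0\le p_1$ and $p_0>p_1$. With this estimate, Simon's criterion together with your Step 1 completes part (1).
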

\noindent For a proof of this lemma, we refer to, e.g., \cite{aubin-lions1, aubin-lions2, aubin-lions3}.

By applying Lemmas \ref{jinsixianyan1}, \ref{jinsixianyan2}, and \ref{aubin}, we will establish the following main theorem in this section, which concerns the existence of global solutions to the approximate problem \eqref{a1}-\eqref{a3}.

\begin{theorem}[Global existence]\label{globalexistence}
	Assume that $h_0$ meets assumption \eqref{jiashe}. Then for any given $\kappa>0$, there exists a weak solution $h$ to \eqref{a1}-\eqref{a3}, such that 
	\begin{align}\label{kappajie1}
		\left\| h \right\|_{L^\infty (0,T; H^1(\Omega))} + \left\| h \right\|_{L^2 (0,T; H^3(\Omega))} \leq C_\kappa,
		\\\label{kappajie2}
		\left\| h_t \right\|_{L^{\frac{4}{3}}(0,T; W^{-1,\frac{4}{3}}(\Omega))} \leq C_\kappa.
	\end{align}
\end{theorem}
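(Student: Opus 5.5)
We will obtain the weak solution of \eqref{a1}-\eqref{a3} as a limit of the Galerkin approximations $h^m$ as $m\to\infty$, with $\kappa>0$ fixed. The order of steps is: global solvability of the ODE system, extraction of convergent subsequences, passage to the limit in the Galerkin identity, and recovery of the bounds \eqref{kappajie1}-\eqref{kappajie2} by lower semicontinuity.

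\emph{Step 1: global solvability of the ODE system.} Lemma \ref{jinsixianyan1} bounds $\|h^m(t)\|_{H^1(\Omega)}$ uniformly in $t$. Since the $\omega_i$ are orthonormal, this bounds the coefficients $g_{im}(t)$ uniformly on $[0,t_m]$. The local solution from Picard--Lindel\"of therefore extends to all of $[0,T]$.

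\emph{Step 2: compactness.} Because $|h^m|_\kappa\ge\kappa$, estimate \eqref{houxuguji} gives
$$\|\nabla\Delta h^m\|_{L^2(Q_T)}\le \kappa^{-3/2}C_\kappa.$$
Together with the $H^1$ and $D_x^2$ bounds and elliptic regularity for the Neumann problem (note $\partial h^m/\partial n=0$), this bounds $h^m$ in $L^2(0,T;H^3(\Omega))$. Lemma \ref{jinsixianyan2} bounds $h^m_t$ in $L^{4/3}(0,T;W^{-1,4/3}(\Omega))$. We extract a subsequence with the following convergences:
\begin{itemize}
\item $h^m\rightharpoonup h$ weakly-$*$ in $L^\infty(0,T;H^1)$ and weakly in $L^2(0,T;H^3)$;
\item $h^m_t\rightharpoonup h_t$ weakly in $L^{4/3}(0,T;W^{-1,4/3})$.
\end{itemize}
Lemma \ref{aubin}(1), applied with $B_0=H^3$, $B=W^{2,4}$ (compact in 2D) and $B_1=W^{-1,4/3}$, gives $h^m\to h$ strongly in $L^2(0,T;W^{2,4}(\Omega))$. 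Lemma \ref{aubin}(2), with $B_0=H^1$ and $B=L^2$, gives $h^m\to h$ in $C([0,T];L^2(\Omega))$. In particular $h(0)=\lim h_0^m=h_0$ in $L^2$, which identifies the initial datum.

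\emph{Step 3: passage to the limit.} We fix $j$ and a test function $\eta\in C_0^\infty([0,T))$, and pass to the limit in the integrated Galerkin identity
$$-(h^m,\omega_j\eta_t)_{Q_T}-(h^m_0,\omega_j)\eta(0)-\big(\alpha_1|h^m|_\kappa^3\nabla\Delta h^m-\alpha_2|h^m|_\kappa^3\nabla h^m+\alpha_3|h^m|_\kappa^2\nabla h^m,\nabla\omega_j\eta\big)_{Q_T}=0.$$
This is the main obstacle, since the coefficients are nonlinear. We handle it as follows.
\begin{itemize}
\item In 2D, $h^m\to h$ strongly in $L^2(0,T;L^\infty)$ and stays bounded in $L^\infty(0,T;L^p)$ for every $p<\infty$. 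Since $s\mapsto|s|_\kappa^3$ is locally Lipschitz with growth $|s|^2+1$, a H\"older argument gives $|h^m|_\kappa^3\to|h|_\kappa^3$ strongly in $L^q(Q_T)$ for some $q\ge 2$ (at least along a subsequence converging a.e., using Vitali and the uniform $L^p$ bounds).
\item Against the weak convergence $\nabla\Delta h^m\rightharpoonup\nabla\Delta h$ in $L^2(Q_T)$, with $\nabla\omega_j\eta$ smooth and bounded, this product converges; the cleanest route is to pair the strong $L^2$ convergence of $|h^m|_\kappa^3\nabla\omega_j\eta$ with the weak $L^2$ convergence of $\nabla\Delta h^m$.
\item The lower-order terms converge directly by the strong convergence of $\nabla h^m$ in $L^2(0,T;L^\infty)$.
\end{itemize}
The density of $\mathrm{span}\{\omega_j\eta\}$ in the relevant test class then yields the weak formulation of \eqref{a1}-\eqref{a3}, with the boundary conditions encoded through the Neumann eigenbasis and the natural boundary term.

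\emph{Step 4: the bounds.} The estimates \eqref{kappajie1} and \eqref{kappajie2} follow from the uniform bounds of Step 2 by weak lower semicontinuity of norms.
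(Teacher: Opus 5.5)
Your proposal is correct and follows the paper's proof. Both arguments use the Galerkin approximations with Lemmas \ref{jinsixianyan1}--\ref{jinsixianyan2}, the bound $|h^m|_\kappa\ge\kappa$ to control $h^m$ in $L^2(0,T;H^3(\Omega))$, Aubin--Lions compactness, weak--strong pairing for $|h^m|_\kappa^3\nabla\Delta h^m$, and lower semicontinuity for the final bounds.

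The differences are minor. In Aubin--Lions you take $B=W^{2,4}(\Omega)$ and get convergence of the coefficients from $L^2(0,T;L^\infty)$ convergence combined with the $L^\infty(0,T;H^1(\Omega))$ bound. The paper instead takes $B=H^2(\Omega)$ and obtains strong $L^6(Q_T)$ convergence through the Gagliardo--Nirenberg inequality; it reuses that device in Section 4, where no uniform $H^3$ bound is available. You also make explicit the ODE continuation and the identification of the initial datum, which the paper leaves implicit.
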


\begin{proof}
	On the basis of Lemma \ref{jinsixianyan1}, since $|h^m|_\kappa = \sqrt{|h^m|^2+\kappa^2} \geq \kappa > 0$, the estimates \eqref{houxuguji} yields that $\nabla \Delta h^m$ is uniformly bounded in $L^2(Q_T)$ for any fixed $\kappa$. Together with Lemma \ref{jinsixianyan2}, we obtain the following uniform estimates
	\begin{align}\label{yizhixing}
		\left\| h^m \right\|_{L^\infty (0,T; H^1(\Omega))} + \left\| h^m \right\|_{L^2 (0,T; H^3(\Omega))} + \left\| h_t^m \right\|_{L^{\frac{4}{3}}(0,T; W^{-1,\frac{4}{3}}(\Omega))} \leq C_\kappa,
	\end{align}
	where the constant $C_\kappa$ depends on $\kappa$ but is independent of $m$. We now choose a sequence $h_0^m \in C^\infty(\bar{\Omega})$, such that
	\begin{align*}
		\left\| h_0^m-h_0 \right\|_{H^1(\Omega)}\rightarrow 0 \quad \text{as } m\rightarrow \infty.
	\end{align*}
	Applying Lemma \ref{aubin} with $p_0=2$, $p_1=\frac{4}{3}$ and spaces
	\begin{align*}
		B_0=H^3(\Omega), \quad B=H^2(\Omega), \quad B_1=W^{-1,\frac{4}{3}}(\Omega).
	\end{align*}
	Thus \eqref{yizhixing} implies that there exists a subsequence, still denoted by $h^m$, and a limit function $h$ such that
	\begin{align}\label{shoulianH2}
		h^m \rightarrow h \quad \text{strongly in } L^2(0,T; H^2(\Omega)).
	\end{align}
	Furthermore, choosing $B_0=H^1(\Omega)$, $B=L^2(\Omega)$, $p_0=\infty$ and $p_1=\frac{4}{3}$, we conclude that
	\begin{align}\label{shoulianwuqiong}
		h^m \rightarrow h \quad \text{strongly in } C([0,T];L^2(\Omega)).
	\end{align}
	Recall the Gagliardo-Nirenberg inequality in the following form
	\begin{align}\label{nirenberg66}
		\left\| u \right\|_{L^6(\Omega)} \leq C \left\| D_x^2 u \right\|^{\frac{1}{3}} \left\| u \right\|^{\frac{2}{3}} + C \left\| u \right\|.
	\end{align}
	Applying this to $u = h^m - h$ and integrating over time yields
	\begin{align*}
		&\int_{0}^{T} \left\| h^m-h \right\|_{L^6(\Omega)}^6 dt 
		\\
		&\leq C \int_{0}^{T} \left\| D_x^2 h^m - D_x^2 h \right\|^2 \left\| h^m-h \right\|^4 dt + C \int_{0}^{T} \left\| h^m-h \right\|^6 dt 
		\\
		&\leq C \left( \int_{0}^{T}\left\| D_x^2h^m-D_x^2h \right\|^2 dt \right) \left({\rm ess} \sup_{t\in [0,T]}\left\| h^m-h \right\|\right)^4 
		\\
		&\quad + C T \left({\rm ess} \sup_{t\in [0,T]}\left\| h^m-h \right\|\right)^6.
	\end{align*}
	Together with \eqref{shoulianH2} and \eqref{shoulianwuqiong}, one arrives at
	\begin{align}\label{shoulianL6}
		h^m \rightarrow h \quad \text{strongly in } L^6(Q_T).
	\end{align}
	It follows from \eqref{yizhixing} that we can extract a further subsequence such that
	\begin{align*}
		h^m &\overset{*}{\rightharpoonup} h \quad {\rm weakly}^* \quad {\rm in}\quad L^\infty(0,T; H^1(\Omega)), \\
		D_x^3 h^m &\rightharpoonup D_x^3 h \quad {\rm weakly} \quad {\rm in} \quad L^2(Q_T), \\
		h_t^m &\rightharpoonup h_t \quad {\rm weakly} \quad {\rm in} \quad L^{\frac{4}{3}}(0,T; W^{-1,\frac{4}{3}}(\Omega)),
	\end{align*}
	which yield \eqref{kappajie1} and \eqref{kappajie2}. Next, we shall derive convergence estimates to show that $h$ is a solution of \eqref{a1}. We have
	\begin{align*}
		\left| |h^m|_{\kappa}^3 - |h|_{\kappa}^3 \right| &\leq \left| h^m-h \right| \left| |h^m|^2+\kappa^2 +\sqrt{|h^m|^2+\kappa^2}\sqrt{|h|^2+\kappa^2} + |h|^2+\kappa^2 \right|, 
		\\
		\left| |h^m|_{\kappa}^2 - |h|_{\kappa}^2 \right| &\leq \left| h^m-h \right| \left| |h^m|+|h| \right|.
	\end{align*}
	Together with \eqref{shoulianL6} and H\"older's inequality, we obtain
	\begin{align}\label{shoulianL2}
		\left\| |h^m|_{\kappa}^3-|h|_{\kappa}^3 \right\|_{L^2(Q_T)} \rightarrow 0, \quad \text{and} \quad \left\| |h^m|_{\kappa}^2-|h|_{\kappa}^2 \right\|_{L^3(Q_T)} \rightarrow 0.
	\end{align}
	Combining the strong convergence of $|h^m|_{\kappa}^3$ in $L^2(Q_T)$ from \eqref{shoulianL2} with the weak convergence of $\nabla \Delta h^m$ in $L^2(Q_T)$, weak-strong convergence yields
	\begin{align*}
		|h^m|_{\kappa}^3 \nabla \Delta h^m \rightharpoonup |h|_{\kappa}^3 \nabla \Delta h \qquad {\rm weakly} \quad {\rm in} \quad L^1(Q_T).
	\end{align*}
	Lemma \ref{jinsixianyan1} provides a uniform bound for $|h^m|_{\kappa}^3 \nabla \Delta h^m$ in $L^{\frac{4}{3}}(Q_T)$. By the reflexivity of $L^{\frac{4}{3}}(Q_T)$ space, this sequence converge weakly in $L^{\frac{4}{3}}(Q_T)$ up to a subsequence. Uniqueness of weak limits implies
	\begin{align*}
		|h^m|_{\kappa}^3 \nabla \Delta h^m \rightharpoonup |h|_{\kappa}^3 \nabla \Delta h \qquad {\rm weakly} \quad {\rm in} \quad L^{\frac{4}{3}}(Q_T).
	\end{align*}
	Then from the strong convergence of $\nabla h^m$ in $L^2(Q_T)$ implied by \eqref{shoulianH2} and from \eqref{shoulianL2}, we obtain
	\begin{align*}
		| h^m |_{\kappa}^3 \nabla h^m \rightarrow |h|_{\kappa}^3 \nabla h \qquad {\rm strongly} \quad {\rm in} \quad L^1(Q_T).
	\end{align*}
	Finally, \eqref{shoulianH2} and \eqref{shoulianL2} imply
	\begin{align*}
		| h^m |_{\kappa}^2 \nabla h^m \rightarrow |h|_{\kappa}^2 \nabla h \qquad {\rm strongly} \quad {\rm in} \quad L^{\frac{6}{5}}(Q_T).
	\end{align*}
	The above convergence results imply that the global weak solution to \eqref{a1}-\eqref{a3} exists. The proof of Theorem \ref{globalexistence} is complete.
\end{proof}

\section{A priori estimates independent of $\kappa$}
In Section 2, the existence of weak solutions which depend on $\kappa$ was proved. In this section, we will establish the \textit{a priori} estimates for solutions to the problem \eqref{a1}-\eqref{a3}, which are uniform with respect to $\kappa$ for any fixed $T>0$.

\begin{lema}
	There exists a constant $C>0$ independent of $\kappa$, such that for any given $T<\infty$, the following estimate holds:
	\begin{align}\label{4.3}
		\left\| h^\kappa \right\|_{L^\infty (0,T; H^1(\Omega))} \leq C.
	\end{align}
\end{lema}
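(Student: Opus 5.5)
The plan is to derive a genuine energy (Lyapunov) identity for \eqref{a1}--\eqref{a3} by writing the whole flux in gradient form with respect to a single pressure-like quantity, rather than testing only with $-\Delta h$. Testing only with $-\Delta h$ leaves lower-order terms like $\int|h|_\kappa^3|\nabla h|^2$, which cannot be closed in 2D without an $L^\infty$ bound. I will write $h=h^\kappa$ and introduce $\Phi_\kappa(s):=\int_0^s |r|_\kappa^{-1}\,dr$, so that $\Phi_\kappa'(s)=|s|_\kappa^{-1}$. Since $|h|_\kappa^2\nabla h=|h|_\kappa^3\nabla\Phi_\kappa(h)$, the flux becomes
\begin{align*}
\alpha_1|h|_\kappa^3\nabla\Delta h-\alpha_2|h|_\kappa^3\nabla h+\alpha_3|h|_\kappa^2\nabla h=-|h|_\kappa^3\nabla p,\qquad p:=-\alpha_1\Delta h+\alpha_2 h-\alpha_3\Phi_\kappa(h).
\end{align*}
Let $\Psi_\kappa(s):=\int_0^s\Phi_\kappa(r)\,dr$, which satisfies $\Psi_\kappa''=|s|_\kappa^{-1}$. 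I will use the energy functional
\begin{align*}
E_\kappa(h):=\int_\Omega\Big(\frac{\alpha_1}{2}|\nabla h|^2+\frac{\alpha_2}{2}h^2-\alpha_3\Psi_\kappa(h)\Big)dx .
\end{align*}

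First I test \eqref{a1} with $p$. This is done formally here; rigorously it is done at the Galerkin level, or justified using $h\in L^2(0,T;H^3(\Omega))$ from Theorem \ref{globalexistence}. Using \eqref{a2} and the no-flux condition \eqref{a2b}, the boundary terms vanish, and I obtain
\begin{align*}
\frac{d}{dt}E_\kappa(h(t))+\int_\Omega|h|_\kappa^3|\nabla p|^2\,dx=0 .
\end{align*}
Hence $E_\kappa(h(t))\le E_\kappa(h_0)$ for all $t\in[0,T]$. As a by-product, this also gives the bound $\||h|_\kappa^{3/2}\nabla p\|_{L^2(Q_T)}\le C$ needed later.

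Second, I bound $\Psi_\kappa$ uniformly in $\kappa\in(0,1]$. Since $|r|_\kappa\ge|r|$, I have $|\Phi_\kappa(s)|\le\int_0^{|s|}dr/\sqrt{r^2+\kappa^2}=\operatorname{arsinh}(|s|/\kappa)$. That bound is not uniform in $\kappa$, so I will estimate differently. Because $\Psi_\kappa\ge0$ is even and convex, $\Psi_\kappa(s)\le \int_0^{|s|}\int_0^r \min(1/\kappa,1/\rho)\,d\rho\,dr$. This is where some care is needed; I would replace it by the cruder bound $\Psi_\kappa(s)\le |s|\,\big(1+\ln(1+|s|)\big)+C$ with $C$ independent of $\kappa$. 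Its upper and lower parts give, respectively, $0\le\alpha_3\Psi_\kappa(s)\le \frac{\alpha_2}{4}s^2+C$. If the logarithmic bound turns out not to be uniform near $\kappa\to0$ for small $|s|$, I would simply use $\Psi_\kappa(s)\le\Psi_1$-type comparisons on $|s|\le1$, where everything is bounded, and argue the large-$|s|$ regime separately as above.

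Third, I conclude. The upper bound on $\Psi_\kappa$ combined with the lower bound on $E_\kappa$ gives
\begin{align*}
\frac{\alpha_1}{2}\|\nabla h(t)\|^2+\frac{\alpha_2}{4}\|h(t)\|^2\le E_\kappa(h_0)+C|\Omega| .
\end{align*}
I also need $E_\kappa(h_0)\le C(\|h_0\|_{H^1(\Omega)}^2+1)$ uniformly in $\kappa$, which follows from $\Psi_\kappa\ge 0$ entering with a minus sign. Together these yield \eqref{4.3}.

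The main obstacle I expect is the rigorous justification of using $p$ as a test function. This includes handling the boundary terms, and in particular the case where $p$ does not lie in the Galerkin space. If needed, I would perform the computation on the Galerkin approximations and pass to the limit using lower semicontinuity, as in the proof of Theorem \ref{globalexistence}.
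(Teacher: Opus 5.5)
\textbf{There is a genuine gap in your second step.} Your first step is fine and is essentially the paper's computation. Your $-\alpha_3\Psi_\kappa(y)$ differs from the paper's $-\alpha_3 y\ln(y+|y|_\kappa)+\alpha_3|y|_\kappa$ only by $\alpha_3 y\ln(1/\kappa)+\alpha_3\kappa$. Its integral is constant in time by mass conservation, so the two dissipation identities are equivalent.

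The problem is that the claimed $\kappa$-uniform bound $\Psi_\kappa(s)\le|s|(1+\ln(1+|s|))+C$ is false. Explicitly, $\Phi_\kappa(s)=\ln\frac{s+|s|_\kappa}{\kappa}$ and
\[
\Psi_\kappa(s)=|s|\ln\frac{1}{\kappa}+U_\kappa(s),\qquad U_\kappa(s):=|s|\ln\bigl(|s|+|s|_\kappa\bigr)-|s|_\kappa+\kappa .
\]
Here $U_\kappa$ is controlled uniformly, since $-\frac{1}{2e}-|s|\le U_\kappa(s)\le |s|\ln(1+2|s|)+1$. The first term, however, blows up for every $s\neq0$; for example $\Psi_\kappa(1)\approx\ln(2/\kappa)-1$. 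Your own bound via $\min(1/\kappa,1/\rho)$ already evaluates to $\frac{\kappa}{2}+|s|\ln(|s|/\kappa)$.

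Consequently the best constant in $\alpha_3\Psi_\kappa(s)\le\frac{\alpha_2}{4}s^2+C$ is of order $\frac{\alpha_3^2}{\alpha_2}\ln^2(1/\kappa)$. Your argument therefore only yields $\|h^\kappa(t)\|_{H^1(\Omega)}^2\le C(1+\ln^2(1/\kappa))$. The fallback does not help either. The divergence is not at small $|s|$, and comparison with $\Psi_1$ goes the wrong way: $|r|_\kappa\le|r|_1$ for $\kappa\le 1$ gives $\Psi_\kappa\ge\Psi_1$.

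\textbf{What is missing is the cancellation of the $\ln(1/\kappa)$ part.} You discard $-\alpha_3\int_\Omega\Psi_\kappa(h_0)\,dx$, which contains exactly $-\alpha_3\ln(1/\kappa)\int_\Omega h_0\,dx$. Keep it and use the following facts:
\begin{itemize}
\item mass conservation $\int_\Omega h^\kappa(t)\,dx=\int_\Omega h_0\,dx$ (test \eqref{a1} with $1$ and use \eqref{a2b});
\item the identity $|s|=s+2s_-$ with $s_-:=\max\{-s,0\}$;
\item $h_0\ge0$.
\end{itemize}
Then $E_\kappa(h^\kappa(t))\le E_\kappa(h_0)$ becomes
\[
\int_\Omega\Bigl(\frac{\alpha_1}{2}|\nabla h^\kappa|^2+\frac{\alpha_2}{2}(h^\kappa)^2-\alpha_3U_\kappa(h^\kappa)\Bigr)dx\le\int_\Omega\Bigl(\frac{\alpha_1}{2}|\nabla h_0|^2+\frac{\alpha_2}{2}h_0^2-\alpha_3U_\kappa(h_0)\Bigr)dx+2\alpha_3\ln\frac{1}{\kappa}\int_\Omega(h^\kappa)_-\,dx .
\]
The paper's normalization $\ln(y+|y|_\kappa)$ builds this subtraction in, which is why its potential is uniform on $y\ge0$.

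The last term still needs an ingredient that the paper's proof also passes over, because $h^\kappa$ can be negative. For $y<0$ one has
\[
-\alpha_3 y\ln(y+|y|_\kappa)=-2\alpha_3|y|\ln(1/\kappa)-\alpha_3|y|\ln(|y|+|y|_\kappa),
\]
so \eqref{4.7} cannot hold with $C_1$ independent of $\kappa$. You can close this with the entropy bound \eqref{6.15}, whose proof does not use \eqref{4.3}. For $s<0$ one has $G_\kappa(s)\ge|s|/\kappa^2-1/A$, hence $\int_\Omega(h^\kappa)_-\,dx\le C\kappa^2$, and $\kappa^2\ln(1/\kappa)$ is bounded. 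The uniform bounds on $U_\kappa$ then give \eqref{4.3}.

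The test-function justification you flag is secondary; uniformity in $\kappa$ is the real obstacle.
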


\begin{proof}
	The derivation of \eqref{4.3} is based on an energy functional $F_{\kappa}[h]$ constructed as follows
	\begin{align}
		\begin{split}\label{ziyounenggai}
			F_{\kappa}[h^{\kappa}]=&\int_{\Omega} \left( \frac{\alpha_1}{2}|\nabla h^{\kappa}|^2+\Psi_\kappa(h^{\kappa}) \right) dx,
			\\
			\Psi_\kappa(h^{\kappa})=&\frac{\alpha_2}{2}(h^{\kappa})^2-\alpha_{3}h^{\kappa}\ln(h^{\kappa}+|h^{\kappa}|_{\kappa})+\alpha_{3}|h^{\kappa}|_{\kappa}+C_1,
		\end{split}
	\end{align}
	where $C_1$ is a sufficiently large positive constant chosen such that
	\begin{align}\label{4.7}
		\Psi_\kappa(y)-\frac{\alpha_2}{4}y^2\geq 0 , \qquad \forall y\in\mathbb{R}, \ \kappa\in(0,1].
	\end{align}
	Then using \eqref{a1}-\eqref{a2b}, \eqref{ziyounenggai} and integrating by parts twice, we obtain
	\begin{align*}
		\frac{dF_{\kappa}}{dt} &= \int_{\Omega} \left( \alpha_1\nabla h^{\kappa} \cdot \nabla h^{\kappa}_t +\Psi_{\kappa}'(h^{\kappa})h^{\kappa}_t \right) dx 
		\nonumber\\
		&= \int_{\Omega} (-\alpha_1\Delta h^{\kappa}+\Psi_{\kappa}'(h^{\kappa}))h^{\kappa}_t dx
		\nonumber\\
		&= \int_{\Omega} (\alpha_1\Delta h^{\kappa}-\Psi_\kappa'(h^{\kappa}))\nabla \cdot \left(\alpha_{1} |h^{\kappa}|_{\kappa}^3 \nabla\Delta h^{\kappa} -\alpha_{2} |h^{\kappa}|_{\kappa}^3 \nabla h^{\kappa} +\alpha_{3} |h^{\kappa}|_{\kappa}^2 \nabla h^{\kappa}\right) dx
		\nonumber\\
		&= -\int_{\Omega} (\alpha_1\nabla\Delta h^{\kappa}-\Psi_\kappa''(h^{\kappa})\nabla h^{\kappa}) \cdot \left(\alpha_{1} |h^{\kappa}|_{\kappa}^3 \nabla\Delta h^{\kappa} -\alpha_{2} |h^{\kappa}|_{\kappa}^3 \nabla h^{\kappa} +\alpha_{3} |h^{\kappa}|_{\kappa}^2 \nabla h^{\kappa}\right) dx
		\nonumber\\
		&= -\int_{\Omega} \frac{1}{|h^{\kappa}|_{\kappa}^3} \left(\alpha_{1} |h^{\kappa}|_{\kappa}^3 \nabla\Delta h^{\kappa} -\alpha_{2} |h^{\kappa}|_{\kappa}^3 \nabla h^{\kappa} +\alpha_{3} |h^{\kappa}|_{\kappa}^2 \nabla h^{\kappa}\right)^2 dx 
		\nonumber\\
		&\leq 0.
	\end{align*}
	Integrating over time from $0$ to $t \in [0,T]$ yields
	\begin{align}\label{4.9}
		F_\kappa[h^{\kappa}(t)] + \int_{0}^{t}\int_{\Omega} \left(\alpha_{1} |h^{\kappa}|_{\kappa}^{\frac{3}{2}} \nabla\Delta h^{\kappa} -\alpha_{2} |h^{\kappa}|_{\kappa}^{\frac{3}{2}} \nabla h^{\kappa} +\alpha_{3} |h^{\kappa}|_{\kappa}^{\frac{1}{2}} \nabla h^{\kappa}\right)^2 dxd\tau = F_\kappa[h_0].
	\end{align}
	By \eqref{jiashe}, \eqref{ziyounenggai} and the Sobolev embedding theorem, one conclude that
	\begin{align}\label{4.10}
		F_\kappa[h^{\kappa}(t)] \leq F_\kappa[h_0] \leq C.
	\end{align}
	Utilizing \eqref{4.7}, the energy functional satisfies the following coercivity property
	\begin{align*}
		F_\kappa[h^{\kappa}(t)] \geq \int_{\Omega} \left( \frac{\alpha_1}{2}|\nabla h^{\kappa}|^2 + \frac{\alpha_2}{4}(h^{\kappa})^2 \right) dx \geq \min\left(\frac{\alpha_1}{2}, \frac{\alpha_2}{4}\right) \left\| h^{\kappa}(t) \right\|_{H^1(\Omega)}^2.
	\end{align*}
	Combining this lower bound with \eqref{4.10}, one arrives at \eqref{4.3}.
\end{proof}

We next turn to higher-order \textit{a priori} estimates. The following lemma bounds the third-order nonlinear terms by the $L^2$-norm of $D_x^2 h^{\kappa}$.

\begin{lema}
	There exists a constant $C>0$ independent of $\kappa$, such that for any given $T<\infty$, the following estimates hold:
	\begin{align}\label{guji2}
		 \left\| | h^{\kappa} |_{\kappa}^{\frac{3}{2}} \nabla\Delta h^{\kappa} \right\|_{L^2(Q_T)} &\leq C\left( 1 + \left\| D_x^2 h^{\kappa} \right\|_{L^2(Q_T)}^2 \right)^{\frac{1}{2}},
		\\\label{guji3}
		\left\| |h^{\kappa}|_{\kappa}^3\nabla\Delta h^{\kappa} \right\|_{L^{\frac{4}{3}}(Q_T)} &\leq C \left(1 + \left\| D_x^2 h^{\kappa} \right\|_{L^2(Q_T)}^2 \right)^{\frac{3}{4}}.
	\end{align}
\end{lema}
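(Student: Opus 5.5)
The plan is to extract both bounds from the dissipation term in the energy identity \eqref{4.9}. Together with \eqref{4.10}, that identity gives
\begin{align*}
\int_0^T\int_\Omega \left|\alpha_1 |h^\kappa|_\kappa^{\frac32}\nabla\Delta h^\kappa - \alpha_2|h^\kappa|_\kappa^{\frac32}\nabla h^\kappa + \alpha_3 |h^\kappa|_\kappa^{\frac12}\nabla h^\kappa\right|^2 dx\,dt \le C .
\end{align*}
By the triangle inequality it then suffices to bound the lower-order pieces $\||h^\kappa|_\kappa^{3/2}\nabla h^\kappa\|_{L^2(Q_T)}$ and $\||h^\kappa|_\kappa^{1/2}\nabla h^\kappa\|_{L^2(Q_T)}$ by $C(1+\|D_x^2h^\kappa\|_{L^2(Q_T)}^2)^{1/2}$. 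Since $\kappa\le1$, we have $|h|_\kappa^3\le C(|h|^3+1)$, so everything reduces to controlling $\int_{Q_T}|h^\kappa|^3|\nabla h^\kappa|^2$ and $\int_{Q_T}|h^\kappa||\nabla h^\kappa|^2$, plus $\int_{Q_T}|\nabla h^\kappa|^2\le CT$, which follows from \eqref{4.3}.

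For the main term I use H\"older: $\int_\Omega |h|^3|\nabla h|^2\le \|h\|_{L^{12}}^3\|\nabla h\|_{L^4}^2$. In 2D, Gagliardo--Nirenberg together with the uniform $H^1$ bound \eqref{4.3} gives $\|\nabla h\|_{L^4}^2\le C\|\nabla h\|\,\|h\|_{H^2}\le C(1+\|D_x^2 h\|)$. Since $H^1\hookrightarrow L^{12}$ in 2D, $\|h\|_{L^{12}}\le C$. Hence the spatial integral is bounded by $C(1+\|D_x^2h\|)\le C(1+\|D_x^2h\|^2)$, and integrating in time yields $C(1+\|D_x^2h^\kappa\|_{L^2(Q_T)}^2)$. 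The $|h|\,|\nabla h|^2$ term is handled in the same way. Here one must use the Neumann condition so that $\|h\|_{H^2}\le C(\|D_x^2 h\|+\|h\|_{H^1})$, or simply work with the full $D_x^2$ norm. This proves \eqref{guji2}; the only point to watch is the choice of the interpolation exponent so that the power of $\|D_x^2 h\|$ inside the time integral is at most $2$.

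For \eqref{guji3}, write $|h|_\kappa^3\nabla\Delta h = |h|_\kappa^{3/2}\cdot |h|_\kappa^{3/2}\nabla\Delta h$ and apply H\"older with $\frac34=\frac14+\frac12$:
\begin{align*}
\||h^\kappa|_\kappa^3\nabla\Delta h^\kappa\|_{L^{\frac43}(Q_T)}\le \||h^\kappa|_\kappa^{\frac32}\|_{L^4(Q_T)}\,\||h^\kappa|_\kappa^{\frac32}\nabla\Delta h^\kappa\|_{L^2(Q_T)} .
\end{align*}
The first factor equals $(\int_{Q_T}|h|_\kappa^6)^{1/4}\le C$, because $H^1\hookrightarrow L^6$ and \eqref{4.3} give a uniform bound. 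This produces only the exponent $\frac12$, which is already no worse than the claimed $\frac34$ because $1+\|D_x^2h\|^2\ge1$. If one prefers not to use that embedding, the same $L^6(Q_T)$ bound follows from \eqref{nirenberg66}, giving $\int|h|^6\le C(1+\|D_x^2h\|^2)$ and so $C(1+\|D_x^2h\|^2)^{1/4}$ for the first factor, matching the stated exponent $\frac34$ exactly.

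The main obstacle is the lower-order term $|h|^3|\nabla h|^2$ in the first estimate: a growth rate that is linear in $\|D_x^2h\|^2$ is required, and it is obtained from the 2D Ladyzhenskaya-type interpolation combined with the $\kappa$-uniform $H^1$ bound.
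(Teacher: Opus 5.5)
Your proposal is correct and follows the paper's route. You start from the dissipation bound in \eqref{4.9}--\eqref{4.10}, split off the lower-order fluxes $|h^\kappa|_\kappa^{3/2}\nabla h^\kappa$ and $|h^\kappa|_\kappa^{1/2}\nabla h^\kappa$ (your triangle inequality is the paper's $\frac12A^2\le (A-B)^2+B^2$), and use the same H\"older splitting $\||h^\kappa|_\kappa^3\nabla\Delta h^\kappa\|_{L^{4/3}}\le \||h^\kappa|_\kappa\|_{L^6}^{3/2}\||h^\kappa|_\kappa^{3/2}\nabla\Delta h^\kappa\|_{L^2}$ for \eqref{guji3}. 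The only difference is the interpolation: where the paper uses the Gagliardo--Nirenberg bounds \eqref{Nirenbergwuqiong} and \eqref{NirenbergR}, you use the 2D embedding $H^1\hookrightarrow L^q$ ($q<\infty$) and Ladyzhenskaya's inequality for $\nabla h^\kappa$, which gives sharper intermediate powers of $\|D_x^2h^\kappa\|$ and exponent $\frac12$ instead of $\frac34$ in \eqref{guji3}. One small slip: since $\frac14+\frac12<1$, your step $\int_\Omega|h|^3|\nabla h|^2\le\|h\|_{L^{12}}^3\|\nabla h\|_{L^4}^2$ needs a factor $|\Omega|^{1/4}$ (or use $\|h\|_{L^6}^3$ instead), which is harmless on a bounded domain.
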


\begin{proof}
	To estimate \eqref{guji2}, we recall the Gagliardo-Nirenberg inequality in the following form
	\begin{align}\label{Nirenbergwuqiong}
		\left\| h^{\kappa} \right\|_{L^\infty(\Omega)} \leq C \left\| D_x^2 h^{\kappa} \right\|^{\frac{1}{2}} \left\| h^{\kappa} \right\|^{\frac{1}{2}} + C\left\| h^{\kappa} \right\|.
	\end{align}
	Utilizing \eqref{4.9}, \eqref{4.10} and applying the inequality $\frac{1}{2}A^2 \leq (A-B)^2 + B^2$, we obtain
	\begin{align*}
		&\frac{\alpha_{1}}{2} \int_{0}^{t}\int_{\Omega} |h^{\kappa}|_{\kappa}^3|\nabla\Delta h^{\kappa}|^2 dxd\tau
		\nonumber\\
		&\leq \int_{0}^{t}\int_{\Omega}\left(\alpha_{1} |h^{\kappa}|_{\kappa}^{\frac{3}{2}} \nabla\Delta h^{\kappa} -\alpha_{2} |h^{\kappa}|_{\kappa}^{\frac{3}{2}} \nabla h^{\kappa} +\alpha_{3} |h^{\kappa}|_{\kappa}^{\frac{1}{2}} \nabla h^{\kappa}\right)^2 dxd\tau
		\nonumber\\
		&\quad + \int_{0}^{t}\int_{\Omega}\left( -\alpha_{2} |h^{\kappa}|_{\kappa}^{\frac{3}{2}} \nabla h^{\kappa} +\alpha_{3} |h^{\kappa}|_{\kappa}^{\frac{1}{2}} \nabla h^{\kappa}\right)^2 dxd\tau
		\nonumber\\
		&\leq C + C \int_{0}^{t}\int_{\Omega} \left( |h^{\kappa}|_{\kappa}^{3} |\nabla h^{\kappa}|^2 + |h^{\kappa}|_{\kappa} |\nabla h^{\kappa}|^2 \right) dxd\tau
		\nonumber\\
		&\leq C + C \int_{0}^{t} \left\| \nabla h^{\kappa} \right\|^2 \left( \left\| h^{\kappa} \right\|_{L^\infty(\Omega)}^3 + \left\| h^{\kappa} \right\|_{L^\infty(\Omega)} + 1 \right) d\tau.
	\end{align*}
	Substituting \eqref{Nirenbergwuqiong} into the above inequality, using \eqref{4.3} and Young's inequality yields
	\begin{align*}
		&\frac{\alpha_{1}}{2} \int_{0}^{t}\int_{\Omega} |h^{\kappa}|_{\kappa}^3|\nabla\Delta h^{\kappa}|^2 dxd\tau 
		\nonumber\\
		&\leq C + C \int_{0}^{t} \left\| \nabla h^{\kappa} \right\|^2 \left( \left\| D_x^2 h^{\kappa} \right\|^{\frac{3}{2}} \left\| h^{\kappa} \right\|^{\frac{3}{2}} + \left\| h^{\kappa} \right\|^3 + \left\| D_x^2 h^{\kappa} \right\|^{\frac{1}{2}} \left\| h^{\kappa} \right\|^{\frac{1}{2}} + \left\| h^{\kappa} \right\| + 1 \right) d\tau
		\nonumber\\
		&\leq C + C \int_{0}^{t} \left( \left\| D_x^2 h^{\kappa} \right\|^{\frac{3}{2}} + \left\| D_x^2 h^{\kappa} \right\|^{\frac{1}{2}} + 1 \right) d\tau
		\nonumber\\
		&\leq C + C \int_{0}^{t} \left\| D_x^2 h^{\kappa} \right\|^2 d\tau,
	\end{align*}
	which yields \eqref{guji2}. Now we are going to estimate \eqref{guji3}, which follows from \eqref{guji2} and the Gagliardo-Nirenberg inequality for $L^r$ spaces for $r\geq 2$:
	\begin{align}\label{NirenbergR}
		\left\| h^{\kappa} \right\|_{L^r(\Omega)} \leq C \left\| D_x^2 h^{\kappa} \right\|^{\frac{1}{2}-\frac{1}{r}} \left\| h^{\kappa} \right\|^{\frac{1}{2}+\frac{1}{r}} + C \left\| h^{\kappa} \right\|.
	\end{align}
	Choosing $1<p<2$ and $pq=2$ with $\frac{1}{q}+\frac{1}{q'}=1$, H\"older's inequality yields
	\begin{align}\label{2.72}
		&\int_{0}^{t}\int_{\Omega}\left(|h^{\kappa}|_{\kappa}^3|\nabla\Delta h^{\kappa}|\right)^p dxd\tau
		\nonumber\\
		&= \int_{0}^{t}\int_{\Omega} |h^{\kappa}|_{\kappa}^{\frac{3p}{2}} \left(|h^{\kappa}|_{\kappa}^{\frac{3}{2}}|\nabla\Delta h^{\kappa}|\right)^p dxd\tau
		\nonumber\\
		&\leq \left(\int_{0}^{t}\int_{\Omega}|h^{\kappa}|_{\kappa}^{\frac{3pq'}{2}}dxd\tau\right)^{\frac{1}{q'}} \left(\int_{0}^{t}\int_{\Omega}|h^{\kappa}|_{\kappa}^{\frac{3pq}{2}}|\nabla\Delta h^{\kappa}|^{pq}dxd\tau\right)^{\frac{1}{q}}
		\nonumber\\
		&\leq C \left(\int_{0}^{t}\int_{\Omega}|h^{\kappa}|_{\kappa}^{\frac{3p}{2-p}}dxd\tau\right)^{\frac{2-p}{2}} \left(\int_{0}^{t}\int_{\Omega}|h^{\kappa}|_{\kappa}^3|\nabla\Delta h^{\kappa}|^2dxd\tau\right)^{\frac{p}{2}}.
	\end{align}
	Using \eqref{4.3} and \eqref{NirenbergR} with $r = \frac{3p}{2-p}$, we estimate the first integral as follows
	\begin{align}\label{2.73}
		\int_{0}^{t}\int_{\Omega}|h^{\kappa}|_{\kappa}^{\frac{3p}{2-p}}dxd\tau
		&\leq \int_{0}^{t} \left\| |h^{\kappa}|+\kappa \right\|_{L^{\frac{3p}{2-p}}(\Omega)}^{\frac{3p}{2-p}} d\tau
		\nonumber\\
		&\leq C \int_{0}^{t} \left\| h^{\kappa} \right\|_{L^{\frac{3p}{2-p}}(\Omega)}^{\frac{3p}{2-p}} d\tau + C
		\nonumber\\
		&\leq C \int_{0}^{t} \left( \left\| D_x^2 h^{\kappa} \right\|^{\frac{5p-4}{4-2p}} \left\| h^{\kappa} \right\|^{\frac{4+p}{4-2p}} + \left\| h^{\kappa} \right\|^{\frac{3p}{2-p}} \right) d\tau + C
		\nonumber\\
		&\leq C \int_{0}^{t} \left\| D_x^2 h^{\kappa} \right\|^2 d\tau + C,
	\end{align}
	for $\frac{5p-4}{4-2p} \leq 2$, equivalent to $p \leq \frac{4}{3}$. Taking $p = \frac{4}{3}$, estimates \eqref{guji2}, \eqref{2.72} and \eqref{2.73} yield \eqref{guji3}. The proof of this lemma is thus complete.
\end{proof}

By introducing an entropy functional, the following lemma yields a uniform $L^2(Q_T)$ bound for $D_x^2h^{\kappa}$.

\begin{lema}\label{entropy}
	There exists a constant $C>0$ independent of $\kappa$, such that for any given $T<\infty$, the following estimate holds:
	\begin{align}\label{D2}
		\left\| D_x^2h^{\kappa} \right\|_{L^2(Q_T)} \leq C.
	\end{align}
\end{lema}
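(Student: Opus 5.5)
We will use an entropy functional adapted to the mobility $|h|_\kappa^3$. Define $G_\kappa$ by $G_\kappa''(y)=1/|y|_\kappa^3$, normalized by $G_\kappa(0)=G_\kappa'(0)=0$. Then $G_\kappa\ge 0$ and $G_\kappa(y)\le \frac{1}{2|y|}+C|y|+C$ uniformly in $\kappa\in(0,1]$; indeed for $y>0$ one has $G_\kappa(y)\le G_0(y)$ with $G_0''=y^{-3}$, suitably shifted. The assumption $1/h_0\in L^1(\Omega)$, $h_0\ge0$, and $h_0\in L^2$ therefore give $\int_\Omega G_\kappa(h_0)\,dx\le C$ independently of $\kappa$. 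We test \eqref{a1} with $G_\kappa'(h^\kappa)$, which is legitimate at the level of the regularized solution since $h^\kappa\in L^2(0,T;H^3)$ and $G_\kappa'$ is smooth with bounded derivative. Integrating by parts with the no-flux conditions \eqref{a2}--\eqref{a2b} and using $G_\kappa''|h|_\kappa^3=1$ gives
\begin{align*}
\frac{d}{dt}\int_\Omega G_\kappa(h^\kappa)\,dx = -\int_\Omega \left(\alpha_1\nabla\Delta h^\kappa\cdot\nabla h^\kappa -\alpha_2|\nabla h^\kappa|^2+\alpha_3 |h^\kappa|_\kappa^{-1}|\nabla h^\kappa|^2\right)dx .
\end{align*}

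Next we integrate by parts once more in the third-order term. By \eqref{a2},
\begin{align*}
-\int_\Omega\nabla\Delta h\cdot\nabla h\,dx=\int_\Omega(\Delta h)^2\,dx .
\end{align*}
The Bochner/Reilly identity gives
\begin{align*}
\int_\Omega (\Delta h)^2\,dx=\int_\Omega|D_x^2h|^2\,dx-\int_{\partial\Omega}\mathrm{II}(\nabla h,\nabla h)\,dS,
\end{align*}
since $\partial_n h=0$. The boundary term is bounded by $C\|\nabla h\|^2_{L^2(\partial\Omega)}\le \varepsilon\|D_x^2h\|^2+C_\varepsilon\|\nabla h\|^2$ by the trace inequality. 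For a convex domain one could instead simply drop it, or one could use elliptic regularity $\|D^2_xh\|\le C(\|\Delta h\|+\|h\|)$ for the Neumann problem; the latter is what I would actually invoke, as it avoids curvature considerations. The $\alpha_3$ term has a favorable sign and is discarded. The $\alpha_2$ term is bounded by $\alpha_2\|\nabla h^\kappa\|^2\le C$ by \eqref{4.3}.

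Integrating in time therefore yields
\begin{align*}
\int_\Omega G_\kappa(h^\kappa(t))\,dx+\alpha_1\int_0^t\|\Delta h^\kappa\|^2\,d\tau\le \int_\Omega G_\kappa(h_0)\,dx+CT\le C .
\end{align*}
Combining this with the Neumann elliptic estimate and \eqref{4.3} gives \eqref{D2}. As a by-product, since $G_\kappa\ge0$, we also obtain a uniform bound on $\int G_\kappa(h^\kappa(t))\,dx$, which will later yield \eqref{111}.

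The main obstacle is the uniform-in-$\kappa$ control of the initial entropy $\int G_\kappa(h_0)$. The explicit integration must be checked carefully: for $y\ge0$, $G_\kappa'(y)=\int_0^y|s|_\kappa^{-3}ds\le \kappa^{-2}$, which is not uniform. So I would rather normalize $G_\kappa$ at $y=1$, i.e. $G_\kappa(1)=G_\kappa'(1)=0$. Then for $0<y\le1$,
\begin{align*}
G_\kappa(y)=\int_y^1\int_s^1 |r|_\kappa^{-3}\,dr\,ds\le \int_y^1\int_s^1 r^{-3}\,dr\,ds\le \frac{1}{2y},
\end{align*}
and for $y\ge1$ we have $G_\kappa(y)\le (y-1)^2/2$. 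With this normalization $G_\kappa\ge0$ still holds by convexity, and the bound on $\int G_\kappa(h_0)$ is uniform. The remaining technical point is justifying the boundary term in the Reilly identity, which the elliptic estimate circumvents.
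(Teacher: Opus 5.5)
There is a genuine gap, caused by a sign error in your entropy identity. Write \eqref{a1} as $h_t=-\nabla\cdot J$, with $J=\alpha_1|h|_\kappa^3\nabla\Delta h-\alpha_2|h|_\kappa^3\nabla h+\alpha_3|h|_\kappa^2\nabla h$ and $J\cdot n=0$. Testing with $G_\kappa'(h^\kappa)$ gives $\frac{d}{dt}\int_\Omega G_\kappa(h^\kappa)\,dx=+\int_\Omega G_\kappa''(h^\kappa)\,\nabla h^\kappa\cdot J\,dx$, hence (this is \eqref{6.7} in the paper)
\[
\frac{d}{dt}\int_\Omega G_\kappa(h^\kappa)\,dx+\alpha_1\|\Delta h^\kappa\|^2+\alpha_2\|\nabla h^\kappa\|^2=\alpha_3\int_\Omega\frac{|\nabla h^\kappa|^2}{|h^\kappa|_\kappa}\,dx .
\]
Your display has the opposite overall sign. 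You then tacitly restored the correct sign of the $\alpha_1$ term but kept the flipped signs of the other two. In fact the gravity term $\alpha_2$ is dissipative, and the Marangoni term is a \emph{source}: it comes from the backward-diffusion part $-\alpha_3\nabla\cdot(|h|_\kappa^2\nabla h)$ of the equation. It cannot be discarded, and \eqref{4.3} does not control it, because the weight $1/|h^\kappa|_\kappa$ is only bounded by $\kappa^{-1}$ and there is no lower bound on $h^\kappa$. So your inequality $\int_\Omega G_\kappa(h^\kappa(t))\,dx+\alpha_1\int_0^t\|\Delta h^\kappa\|^2\,d\tau\le C+CT$ is unjustified, and the step you skip is exactly the non-routine part of the lemma.

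The missing idea, as in the paper, is to replace the singular weight by a logarithm and absorb it into the entropy. Set $L_\kappa(s)=\int_A^s|r|_\kappa^{-1}\,dr$, so that $\nabla L_\kappa(h^\kappa)=\nabla h^\kappa/|h^\kappa|_\kappa$. Using $\partial_n h^\kappa=0$ and Young's inequality,
\[
\alpha_3\int_\Omega\frac{|\nabla h^\kappa|^2}{|h^\kappa|_\kappa}\,dx=-\alpha_3\int_\Omega\Delta h^\kappa\,L_\kappa(h^\kappa)\,dx\le\frac{\alpha_1}{2}\|\Delta h^\kappa\|^2+C\int_\Omega|L_\kappa(h^\kappa)|^2\,dx .
\]
Now compare growth rates, uniformly in $\kappa\in(0,1]$. $L_\kappa$ grows only logarithmically. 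By contrast, $G_\kappa(s)\ge\frac{1}{2(|s|+\kappa)}-C$ for $s\le 1$, and $G_\kappa$ grows at least linearly as $|s|\to\infty$. Hence $|L_\kappa(s)|^2\le C\,(G_\kappa(s)+1)$, which gives
\[
\frac{d}{dt}\int_\Omega G_\kappa(h^\kappa)\,dx+\frac{\alpha_1}{2}\|\Delta h^\kappa\|^2\le C\int_\Omega G_\kappa(h^\kappa)\,dx+C .
\]
Gronwall then yields $\sup_{t\le T}\int_\Omega G_\kappa(h^\kappa(t))\,dx+\int_0^T\|\Delta h^\kappa\|^2\,dt\le C(T)$, with exponential rather than linear dependence on $T$. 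Your Neumann elliptic estimate then gives \eqref{D2}. The rest of your proposal agrees with the paper: normalizing $G_\kappa$ at a positive point (you use $1$, the paper uses $A$), the uniform bound on $\int_\Omega G_\kappa(h_0)\,dx$, and using elliptic regularity instead of the Reilly identity.
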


\begin{proof}
We introduce the auxiliary functions $g_\kappa(s)$ and $G_\kappa(s)$ defined by
\begin{align*}
	g_\kappa(s) = \int_{A}^{s} \frac{1}{|r|_{\kappa}^3} dr, \qquad G_\kappa(s) = \int_{A}^{s} g_\kappa(t) dt = \int_{A}^{s}\int_{A}^{t} \frac{1}{|r|_{\kappa}^3} dr dt,
\end{align*}
where $A>0$ is a fixed constant. Differentiation yields
\begin{align*}
	G_\kappa'(s) = g_\kappa(s), \quad G_\kappa''(s) = g_\kappa'(s) = \frac{1}{|s|_{\kappa}^3}.
\end{align*}
Since $G_\kappa(A) = G_\kappa'(A) = 0$ and $G_\kappa''(s) > 0$, it is strictly convex and satisfies $G_\kappa(s) \geq 0$ for $s \in \mathbb{R}$. We compute $G_\kappa(s)$ as
\begin{align*}
	G_\kappa(s)&= \int_{A}^{s} \left( \frac{t}{\kappa^2\sqrt{t^2+\kappa^2}} - \frac{A}{\kappa^2\sqrt{A^2+\kappa^2}} \right) dt
	\nonumber\\
	&= \frac{\sqrt{s^2+\kappa^2}}{\kappa^2} - \frac{As}{\kappa^2\sqrt{A^2+\kappa^2}} - \frac{1}{\sqrt{A^2+\kappa^2}}.
\end{align*}
Taking the limit as $\kappa \rightarrow 0$, we obtain the classical entropy function $G_0(s)$:
\begin{align}\label{g}
	G_0(s)=\left\{
	\begin{aligned}
		&\frac{s}{2A^2}+\frac{1}{2s}-\frac{1}{A}, \quad &s > 0,\\
		&\infty, \quad &s\leq 0.
	\end{aligned}\right.
\end{align}
The inequality $0 \leq G_\kappa(s) \leq G_0(s)$ holds for $s \in \mathbb{R}$. Testing \eqref{a1} by $g_\kappa(h^{\kappa})$, integrating over $Q_t$ and applying integration by parts with \eqref{a2b} yields
\begin{align}\label{6.7}
	&\int_{\Omega}G_\kappa(h^{\kappa}(t,x))dx + \int_{0}^{t}\int_{\Omega} \left( \alpha_{1}|\Delta h^{\kappa}|^2 + \alpha_{2}|\nabla h^{\kappa}|^2 \right) dxd\tau
	\nonumber\\
	&= \int_{\Omega}G_\kappa(h_0(x))dx + \int_{0}^{t}\int_{\Omega} \alpha_{3}\frac{|\nabla h^{\kappa}|^2}{|h^{\kappa}|_\kappa} dxd\tau.
\end{align}
By \eqref{jiashe} and \eqref{g}, we obtain
\begin{align}\label{G0}
	\int_{\Omega} G_\kappa(h_0(x)) dx \leq \int_{\Omega} G_{0}(h_0(x)) dx \leq C.
\end{align}
To estimate the last term in \eqref{6.7}, we introduce a smooth function $L_\kappa(s) = \int_A^s \frac{1}{|r|_\kappa} dr$ satisfying $\nabla L_\kappa(h^{\kappa}) = \frac{\nabla h^{\kappa}}{|h^{\kappa}|_\kappa}$. Applying integration by parts and Young's inequality, one conclude that
\begin{align}\label{2.6.11}
	\int_{0}^{t}\int_{\Omega} \alpha_{3}\frac{|\nabla h^{\kappa}|^2}{|h^{\kappa}|{\kappa}} dxd\tau
	&= \int_{0}^{t}\int_{\Omega} \alpha_{3} \nabla h^{\kappa} \cdot \nabla L_\kappa(h^{\kappa}) dxd\tau
	\nonumber\\
	&= - \int_{0}^{t}\int_{\Omega} \alpha_{3} \Delta h^{\kappa} L_\kappa(h^{\kappa}) dxd\tau
	\nonumber\\
	&\leq \int_{0}^{t}\int_{\Omega} \frac{\alpha_1}{2} |\Delta h^{\kappa}|^2 dxd\tau + C_{\alpha_1} \int_{0}^{t}\int_{\Omega} |L_\kappa(h^{\kappa})|^2 dxd\tau.
\end{align}
Notice that $L_\kappa(s)$ exhibits a logarithmic growth as $|s| \rightarrow 0$, whereas $G_\kappa(s)$ grows as $\frac{1}{|s|_\kappa}$. The inverse growth dominates the logarithmic square growth, implying there exists a constant $C$, independent of $\kappa$, such that $C_{\alpha_1} |L_\kappa(s)|^2 \leq  G_\kappa(s) + C$ for $s \in \mathbb{R}$. Thus we have
\begin{align}\label{Lk}
	C_{\alpha_1} \int_{0}^{t}\int_{\Omega} |L_\kappa(h^{\kappa})|^2 dxd\tau \leq  \int_{0}^{t}\int_{\Omega} G_\kappa(h^{\kappa}) dxd\tau + C.
\end{align}
Substituting \eqref{G0}, \eqref{2.6.11} and \eqref{Lk} into \eqref{6.7}, one arrive at
\begin{align*}
	\int_{\Omega}G_\kappa(h^{\kappa}(t,x))dx + \frac{\alpha_1}{2} \int_{0}^{t}\int_{\Omega} |\Delta h^{\kappa}|^2 dxd\tau \leq C + \int_{0}^{t}\int_{\Omega} G_\kappa(h^{\kappa}(\tau,x)) dxd\tau.
\end{align*}
Applying Gronwall's lemma to $\int_{\Omega}G_\kappa(h^{\kappa})dx$ yields
\begin{align}\label{6.15}
	\int_{\Omega}G_\kappa(h^{\kappa}(t,x))dx \leq C, \quad \text{and} \quad \int_{0}^{t}\int_{\Omega}|\Delta h^{\kappa}|^2dxd\tau \leq C.
\end{align}
The combination of this uniform $L^2(Q_T)$ bound for $\Delta h^{\kappa}$ and standard elliptic estimates leads to \eqref{D2}, which completes the proof of this lemma.
\end{proof}


\begin{cor}\label{cor}
		There exists a constant $C$ independent of $\kappa$, such that for any given $T<\infty$, the following estimates hold:
	\begin{align}\label{guji32}
		\left\| | h^{\kappa} |_{\kappa}^{\frac{3}{2}} \nabla\Delta h^{\kappa} \right\|_{L^2(Q_T)} &\leq C,
		\\\label{guji43}
		\left\| |h^{\kappa}|_{\kappa}^3 \nabla\Delta h^{\kappa} \right\|_{L^{\frac{4}{3}}(Q_T)} &\leq C.
	\end{align}
\end{cor}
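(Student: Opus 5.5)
The corollary follows by combining the conditional estimates \eqref{guji2} and \eqref{guji3} with the unconditional bound \eqref{D2} of Lemma \ref{entropy}. The plan is simply to substitute. Lemma \ref{entropy} gives $\|D_x^2 h^\kappa\|_{L^2(Q_T)}\le C$ with $C$ independent of $\kappa$. Inserting this into the right-hand sides of \eqref{guji2} and \eqref{guji3} bounds them by $C(1+C^2)^{1/2}$ and $C(1+C^2)^{3/4}$ respectively. These constants depend only on $\alpha_1,\alpha_2,\alpha_3$, $\Omega$, $T$ and the data $\|h_0\|_{H^1(\Omega)}$, $\|1/h_0\|_{L^1(\Omega)}$, and not on $\kappa$. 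This yields \eqref{guji32} and \eqref{guji43}.

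The only point requiring care is to confirm that every constant in the chain is genuinely $\kappa$-independent. This means checking three inputs:
\begin{itemize}
\item the energy bound \eqref{4.10}, where $F_\kappa[h_0]\le C$ uniformly for $\kappa\in(0,1]$ because $\Psi_\kappa$ is controlled uniformly in $\kappa$ on $H^1(\Omega)\hookrightarrow L^p(\Omega)$;
\item the Gagliardo--Nirenberg constants in \eqref{Nirenbergwuqiong} and \eqref{NirenbergR}, which depend only on $\Omega$;
\item the Gronwall step in Lemma \ref{entropy}, which uses $\int_\Omega G_\kappa(h_0)\le\int_\Omega G_0(h_0)$, finite by \eqref{jiashe}.
\end{itemize}
In particular, the elementary inequality $C_{\alpha_1}|L_\kappa(s)|^2\le G_\kappa(s)+C$ must hold uniformly in $\kappa$. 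I expect this to be the only mildly delicate point, so I would verify it by comparing the two functions. As $s\to\infty$, $L_\kappa(s)$ grows only logarithmically, while $G_\kappa(s)$ grows at least linearly. For small and negative $s$, $|L_\kappa(s)|\lesssim 1+|\ln(|s|+\kappa)|$, while $G_\kappa(s)\gtrsim 1/(|s|+\kappa)-C$. Taking this as established, as the paper does, the corollary is immediate and involves no new analytic obstacle.
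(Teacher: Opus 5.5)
Your proposal is correct and matches the paper's proof, which consists exactly of inserting the $\kappa$-uniform bound \eqref{D2} into the right-hand sides of \eqref{guji2} and \eqref{guji3}. Your $\kappa$-independence audit, however, skips the one input that fails as written: the coercivity \eqref{4.7}, which underlies both \eqref{4.3} and the dissipation bound taken from \eqref{4.9}. For fixed $y<0$ one has $\Psi_\kappa(y)=2\alpha_3|y|\ln\kappa+O(1)\to-\infty$, so this needs repair, for instance via $\int_\Omega (h^\kappa)_-\,dx\le C\kappa^2$, which follows from \eqref{6.15} and $G_\kappa(s)\ge|s|/\kappa^2-1/A$ for $s<0$. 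Also, your sketched bound $|L_\kappa(s)|\lesssim 1+|\ln(|s|+\kappa)|$ is false for fixed $s<0$, where $|L_\kappa(s)|=2|\ln\kappa|+O(1)$; the comparison with $G_\kappa$ still holds because $G_\kappa(s)\ge 1/\kappa-1/A$ for $s\le 0$.
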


\begin{proof}
	Estimates \eqref{guji32} and \eqref{guji43} follow from substituting \eqref{D2} into \eqref{guji2} and \eqref{guji3}.
\end{proof}


\begin{lema}
	There exists a constant $C>0$ independent of $\kappa$, such that for any given $T<\infty$, the following estimate holds:
	\begin{align}\label{lemma4.2}
		\left\| h^{\kappa}_t \right\|_{L^{\frac{4}{3}}(0,T; W^{-1,\frac{4}{3}}(\Omega))} \leq C.
	\end{align}
\end{lema}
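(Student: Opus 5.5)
The estimate will come from duality with the weak formulation of \eqref{a1}--\eqref{a2b}. For a.e.\ $t$ and any test function $\varphi\in W^{1,4}(\Omega)$, the no-flux condition \eqref{a2b} removes the boundary term, and we get
\begin{align*}
	\langle h^\kappa_t,\varphi\rangle = \int_\Omega \left(\alpha_1 |h^\kappa|_\kappa^3\nabla\Delta h^\kappa - \alpha_2 |h^\kappa|_\kappa^3\nabla h^\kappa + \alpha_3 |h^\kappa|_\kappa^2\nabla h^\kappa\right)\cdot\nabla\varphi \, dx .
\end{align*}
Hölder's inequality with exponents $\frac43$ and $4$ then gives
\begin{align*}
	\|h^\kappa_t(t)\|_{W^{-1,\frac43}(\Omega)} \le C\left\| |h^\kappa|_\kappa^3\nabla\Delta h^\kappa \right\|_{L^{\frac43}(\Omega)} + C\left\| |h^\kappa|_\kappa^3\nabla h^\kappa \right\|_{L^{\frac43}(\Omega)} + C\left\| |h^\kappa|_\kappa^2\nabla h^\kappa \right\|_{L^{\frac43}(\Omega)} .
\end{align*}
Raising to the power $\frac43$ and integrating in time reduces \eqref{lemma4.2} to bounding each of the three fluxes in $L^{\frac43}(Q_T)$ uniformly in $\kappa$.

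\textbf{The principal flux.} The bound on $|h^\kappa|_\kappa^3\nabla\Delta h^\kappa$ in $L^{\frac43}(Q_T)$ is exactly \eqref{guji43} of Corollary \ref{cor}.

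\textbf{The lower-order fluxes.} Since $\kappa\le1$, we have $|h^\kappa|_\kappa\le |h^\kappa|+1$. For a.e.\ $t$, Hölder's inequality in space with exponents $\frac32$ and $3$ gives
\begin{align*}
	\int_\Omega |h^\kappa|_\kappa^{4}|\nabla h^\kappa|^{\frac43}dx \le \left\| |h^\kappa|_\kappa \right\|_{L^{12}(\Omega)}^{4}\left\| \nabla h^\kappa \right\|^{\frac43}.
\end{align*}
The factor $\|\nabla h^\kappa\|$ is bounded uniformly in $t$ by \eqref{4.3}. For the other factor, \eqref{NirenbergR} with $r=12$ gives
\begin{align*}
	\|h^\kappa\|_{L^{12}(\Omega)}^{4}\le C\|D_x^2h^\kappa\|^{\frac53}\|h^\kappa\|^{\frac73}+C\|h^\kappa\|^4 .
\end{align*}
Because $\frac53<2$, integrating in time and using \eqref{4.3} and \eqref{D2} bounds this uniformly in $\kappa$. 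The term $|h^\kappa|_\kappa^2\nabla h^\kappa$ is handled the same way with a smaller Lebesgue exponent, so its bound is easier.

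\textbf{Main obstacle.} The hard work is already done in Lemma \ref{entropy} and Corollary \ref{cor}. The only step that needs care here is checking that the Gagliardo--Nirenberg exponents stay within the time integrability available, namely $D_x^2h^\kappa\in L^2(Q_T)$ and $h^\kappa\in L^\infty(0,T;H^1(\Omega))$. The choice $q=\frac43$ is compatible with \eqref{guji43}, which is why the estimate is stated in $L^{\frac43}(0,T;W^{-1,\frac43}(\Omega))$. Combining the three flux bounds yields \eqref{lemma4.2}.
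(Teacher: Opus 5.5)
Your proposal is correct and follows the paper's proof. Both arguments use duality against the weak form of \eqref{a1}--\eqref{a2b} with H\"older's inequality in the $L^{\frac43}$--$L^4$ pairing. Both bound the principal flux by \eqref{guji43}, and both control the lower-order fluxes through Gagliardo--Nirenberg together with \eqref{4.3} and \eqref{D2}. The only difference is cosmetic: the paper puts $h^\kappa$ in $L^\infty(\Omega)$ via \eqref{Nirenbergwuqiong}, which yields $\|D_x^2h^\kappa\|^2$, exactly integrable in time. Your H\"older split into $L^{12}\times L^2$ yields the slightly better exponent $\frac53$.
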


\begin{proof}
	Since $\Omega$ is a bounded domain, H\"older's inequality implies $\left\| \nabla h^{\kappa} \right\|_{L^{\frac{4}{3}}(\Omega)} \leq C \left\| \nabla h^{\kappa} \right\|$. Applying this with the uniform bounds \eqref{4.3}, \eqref{D2} and the Gagliardo-Nirenberg inequality \eqref{Nirenbergwuqiong} yields
	\begin{align}\label{ht1}
		\int_{0}^{T}\int_{\Omega} |h^{\kappa}|_{\kappa}^4 |\nabla h^{\kappa}|^{\frac{4}{3}} dxdt
		&\leq C \int_{0}^{T} \left( \left\| h^{\kappa} \right\|_{L^\infty(\Omega)}^4 + 1 \right) \left\| \nabla h^{\kappa} \right\|_{L^{\frac{4}{3}}(\Omega)}^{\frac{4}{3}} dt \nonumber\\
		&\leq C \int_{0}^{T} \left( \left\| h^{\kappa} \right\|_{L^\infty(\Omega)}^4 + 1 \right) \left\| \nabla h^{\kappa} \right\|^{\frac{4}{3}} dt \nonumber\\
		&\leq C \int_{0}^{T} \left( \left\| D_x^2 h^{\kappa} \right\|^2 \left\| h^{\kappa} \right\|^2 \left\| \nabla h^{\kappa} \right\|^{\frac{4}{3}} + \left\| h^{\kappa} \right\|^4 \left\| \nabla h^{\kappa} \right\|^{\frac{4}{3}} + \left\| \nabla h^{\kappa} \right\|^{\frac{4}{3}} \right) dt \nonumber\\
		&\leq C \int_{0}^{T} \left( \left\| D_x^2 h^{\kappa} \right\|^2 + 1 \right) dt \nonumber\\
		&\leq C.
	\end{align}
	For the lower-order term, we obtain
	\begin{align}\label{ht2}
		\int_{0}^{T}\int_{\Omega} |h^{\kappa}|_{\kappa}^{\frac{8}{3}} |\nabla h^{\kappa}|^{\frac{4}{3}} dxdt
		&\leq C \int_{0}^{T} \left( \left\| h^{\kappa} \right\|_{L^\infty(\Omega)}^{\frac{8}{3}} + 1 \right) \left\| \nabla h^{\kappa} \right\|_{L^{\frac{4}{3}}(\Omega)}^{\frac{4}{3}} dt \nonumber\\
		&\leq C \int_{0}^{T} \left( \left\| h^{\kappa} \right\|_{L^\infty(\Omega)}^{\frac{8}{3}} + 1 \right) \left\| \nabla h^{\kappa} \right\|^{\frac{4}{3}} dt \nonumber\\
		&\leq C \int_{0}^{T} \left( \left\| D_x^2 h^{\kappa} \right\|^{\frac{4}{3}} \left\| h^{\kappa} \right\|^{\frac{4}{3}} \left\| \nabla h^{\kappa} \right\|^{\frac{4}{3}} + \left\| h^{\kappa} \right\|^{\frac{8}{3}} \left\| \nabla h^{\kappa} \right\|^{\frac{4}{3}} + \left\| \nabla h^{\kappa} \right\|^{\frac{4}{3}} \right) dt \nonumber\\
		&\leq C \int_{0}^{T} \left( \left\| D_x^2 h^{\kappa} \right\|^{\frac{4}{3}} + 1 \right) dt \nonumber\\
		&\leq C.
	\end{align}
	For any test function $\varphi \in C_{0}^{\infty}(0,T; C^\infty(\bar{\Omega}))$, the weak formulation of \eqref{a1} and estimates \eqref{guji43}, \eqref{ht1} and \eqref{ht2} yield
	\begin{align*}
		&\left| \int_{0}^{T} \langle h^{\kappa}_t, \varphi \rangle dt \right| 
		\nonumber\\
		&= \left| \iint_{Q_T} \left( \alpha_{1}|h^{\kappa}|_{\kappa}^3 \nabla\Delta h^{\kappa} - \alpha_{2} |h^{\kappa}|_{\kappa}^3 \nabla h^{\kappa} + \alpha_{3} |h^{\kappa}|_{\kappa}^2 \nabla h^{\kappa} \right) \cdot \nabla \varphi \,dxdt \right| \nonumber\\
		&\leq \left\| \alpha_{1}|h^{\kappa}|_{\kappa}^3 \nabla\Delta h^{\kappa} - \alpha_{2} |h^{\kappa}|_{\kappa}^3 \nabla h^{\kappa} + \alpha_{3} |h^{\kappa}|_{\kappa}^2 \nabla h^{\kappa} \right\|_{L^{\frac{4}{3}}(Q_T)} \left\| \nabla \varphi \right\|_{L^4(Q_T)} \nonumber\\
		&\leq C \left( \left\| |h^{\kappa}|_{\kappa}^3 \nabla\Delta h^{\kappa} \right\|_{L^{\frac{4}{3}}(Q_T)} + \left\| |h^{\kappa}|_{\kappa}^3 \nabla h^{\kappa} \right\|_{L^{\frac{4}{3}}(Q_T)} + \left\| |h^{\kappa}|_{\kappa}^2 \nabla h^{\kappa} \right\|_{L^{\frac{4}{3}}(Q_T)} \right) \left\| \nabla \varphi \right\|_{L^4(Q_T)} \nonumber\\
		&\leq C \left\| \varphi \right\|_{L^4(0,T; W^{1,4}(\Omega))}.
	\end{align*}
	By a density argument, the above inequality implies \eqref{lemma4.2} and completes the proof of the lemma.
\end{proof}

\section{Non-negativity and Existence of solutions to the original problem}

In this section, we utilize the \textit{a priori} estimates independent of $\kappa$ established in Section 3 to study the convergence of $h^\kappa$ as $\kappa\rightarrow 0$. Subsequently, we will prove the non-negativity of limit function and complete the proof of Theorem \ref{jieguo}. 
Before proceeding to the main results of this section, we recall Egorov's theorem, which bridges almost everywhere convergence and uniform convergence.

\begin{theorem}[Egorov]\label{shibian}
	Let $(\Gamma,\Sigma,\mu)$ be a measure space with $\mu(\Gamma)<\infty$, and let $\{f_j\}_{j\in \mathbb{N}}$ and $f$ be real-valued, measurable functions on $\Gamma$. Suppose $f_j(x)\rightarrow f(x)$ as $j\rightarrow\infty$ for almost every $x\in\Gamma$.
	
	Then, for every $\varepsilon>0$, there exists a measurable subset $M_\varepsilon\subset\Gamma$ with $\mu(\Gamma \setminus M_\varepsilon)<\varepsilon$, such that $f_j(x)$ converges to $f(x)$ uniformly on $M_\varepsilon$. That is, for every $\delta>0$, there exists an integer $N_\delta > 0$ such that for all $j>N_\delta$ and all $x\in M_\varepsilon$, there holds
	\begin{align*}
		|f_j(x)-f(x)|<\delta.
	\end{align*}
\end{theorem}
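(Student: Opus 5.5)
The plan is the classical argument for Egorov's theorem. The idea is to control, for each tolerance level $1/k$, the set of points where $f_j$ still deviates from $f$ by at least $1/k$ for some $j\ge n$. Continuity of the measure from above then makes these sets small for large $n$, and removing a countable union of them leaves the desired set $M_\varepsilon$.

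First, let $Z\subset\Gamma$ be the null set on which $f_j(x)\not\to f(x)$. For $k,n\in\mathbb{N}$ define
\begin{align*}
	E_{k,n} := \bigcup_{j\ge n}\left\{x\in\Gamma \,:\, |f_j(x)-f(x)|\ge \tfrac1k\right\}.
\end{align*}
Each set is measurable, since $f_j-f$ is measurable and countable unions preserve measurability. For fixed $k$ the sequence $E_{k,n}$ decreases in $n$. Its intersection $\bigcap_n E_{k,n}$ consists of points with $|f_j(x)-f(x)|\ge 1/k$ for infinitely many $j$, so it is contained in $Z$ and has measure zero. Because $\mu(\Gamma)<\infty$, continuity of $\mu$ from above gives $\mu(E_{k,n})\to 0$ as $n\to\infty$. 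This is the only place the finiteness of the measure is used, and it is the essential step: without $\mu(\Gamma)<\infty$ the conclusion fails, for instance with $f_j=\chi_{[j,j+1]}$ on $\mathbb{R}$.

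Next, for each $k$ I choose an index $n_k$ with $\mu(E_{k,n_k})<\varepsilon\,2^{-k}$, and set
\begin{align*}
	M_\varepsilon := \Gamma\setminus \bigcup_{k\ge1} E_{k,n_k}.
\end{align*}
By countable subadditivity, $\mu(\Gamma\setminus M_\varepsilon)\le \sum_k \varepsilon 2^{-k} = \varepsilon$. To obtain the strict inequality required in the statement, I use $\varepsilon 2^{-k-1}$ in place of $\varepsilon 2^{-k}$.

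Finally, I verify uniform convergence on $M_\varepsilon$. Given $\delta>0$, pick $k$ with $1/k<\delta$ and put $N_\delta := n_k$. Any $x\in M_\varepsilon$ lies outside $E_{k,n_k}$, so $|f_j(x)-f(x)|<1/k<\delta$ for all $j\ge n_k$, and in particular for all $j>N_\delta$. Apart from the measure-continuity step already noted, the argument is routine bookkeeping.
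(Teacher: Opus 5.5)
Your proof is correct: it is the classical argument, applying continuity from above to the decreasing sets $E_{k,n}$ (finite because $\mu(\Gamma)<\infty$) and then spending a budget of $\varepsilon 2^{-k}$ on each tolerance $1/k$. The paper gives no proof of its own and only refers to a standard textbook, where essentially this same argument appears. Your switch to $\varepsilon 2^{-k-1}$ is harmless but not needed, because strict termwise inequalities between convergent series already give a strict inequality between their sums.
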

\noindent A proof of Theorem \ref{shibian} can be found, for example, in \cite[p.16]{shi}.

With the uniform \textit{a priori} estimates established in the previous section, we pass to the limit as $\kappa \to 0$. The following lemma extracts convergent subsequences of $h^\kappa$ and establishes their convergence properties.
\begin{lema}\label{shoulian1}
	Let $0<\alpha<1$. There exists a subsequence $\kappa_n\rightarrow 0$ and a limit function $h \in L^2(0,T;C^\alpha(\bar{\Omega})) \cap C([0,T];L^2(\Omega))$ satisfying
	\begin{align}\label{5.1}
		h \in L^\infty(0,T;H^1(\Omega)) \cap L^2(0,T;H^2(\Omega)), \qquad h_t \in L^{\frac{4}{3}}(0,T;W^{-1,\frac{4}{3}}(\Omega)),
	\end{align}
	such that the sequence $h^{\kappa_n}$, still denoted by $h^{\kappa}$ for simplicity, satisfies the following convergence properties as $\kappa \rightarrow 0$:
	\begin{alignat}{2}
		\label{5.2}
		h^\kappa &\rightarrow h \quad && \text{strongly in } L^2(0,T;C^\alpha(\bar{\Omega})), \\
		\label{5.2b}
		h^\kappa(t) &\rightarrow h(t) \quad && \text{strongly in } C^\alpha(\bar{\Omega}) \text{ for a.e. } t \in (0,T), \\
		\label{5.3}
		h^\kappa &\rightarrow h \quad && \text{strongly in } C([0,T];L^2(\Omega)), \\
		\label{kappa6}
		|h^\kappa|_{\kappa} &\rightarrow |h| \quad && \text{strongly in } L^6(Q_{T}), \\
		\label{5.4}
		|h^\kappa|_{\kappa}^3 &\rightarrow |h|^3 \quad && \text{strongly in } L^2(Q_{T}), \\
		\label{5.5}
		|h^\kappa|_{\kappa}^2 &\rightarrow |h|^2 \quad && \text{strongly in } L^3(Q_{T}), \\
		\label{5.55}
		\nabla h^\kappa &\rightharpoonup \nabla h \quad && \text{weakly \, in } L^2(Q_{T}).
	\end{alignat}
\end{lema}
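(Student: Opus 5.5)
The plan is to use the uniform bounds of Section 3 together with the Aubin--Lions lemma (Lemma \ref{aubin}) and then upgrade the convergences by interpolation. By \eqref{4.3} and Lemma \ref{entropy}, $h^\kappa$ is bounded in $L^\infty(0,T;H^1(\Omega))\cap L^2(0,T;H^2(\Omega))$, and \eqref{lemma4.2} bounds $h^\kappa_t$ in $L^{\frac43}(0,T;W^{-1,\frac43}(\Omega))$, all uniformly in $\kappa$. Banach--Alaoglu and reflexivity give a subsequence with $h^\kappa\overset{*}{\rightharpoonup}h$ in $L^\infty(0,T;H^1)$, $h^\kappa\rightharpoonup h$ in $L^2(0,T;H^2)$ and $h^\kappa_t\rightharpoonup h_t$ in $L^{\frac43}(0,T;W^{-1,\frac43})$. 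Lower semicontinuity of norms then gives \eqref{5.1}, and \eqref{5.55} follows immediately.

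For the strong convergences I will apply Lemma \ref{aubin}(1) with $B_0=H^2(\Omega)$, $B=C^\alpha(\bar\Omega)$, $B_1=W^{-1,\frac43}(\Omega)$, $p_0=2$, $p_1=\frac43$. Since $\Omega\subset\mathbb{R}^2$, the embedding $H^2(\Omega)\hookrightarrow C^\alpha(\bar\Omega)$ is compact for every $0<\alpha<1$: indeed $H^2\hookrightarrow C^{\beta}$ for all $\beta<1$, and $C^\beta\hookrightarrow C^\alpha$ is compact for $\alpha<\beta$. Also $C^\alpha\hookrightarrow L^2\hookrightarrow W^{-1,\frac43}$. One technical point is that $C^\alpha$ is not reflexive, while Lemma \ref{aubin} only requires $B_0$ and $B_1$ to be reflexive, so this is admissible. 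This yields \eqref{5.2}. Passing to a further subsequence, $\|h^\kappa(t)-h(t)\|_{C^\alpha}\to0$ for a.e.\ $t$, which is \eqref{5.2b}. Next, Lemma \ref{aubin}(2) with $B_0=H^1$, $B=L^2$, $p_0=\infty$, $p_1=\frac43>1$ gives \eqref{5.3} and $h\in C([0,T];L^2(\Omega))$.

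To get \eqref{kappa6}, I will first show $h^\kappa\to h$ in $L^6(Q_T)$ exactly as in the proof of Theorem \ref{globalexistence}. Applying \eqref{nirenberg66} to $u=h^\kappa-h$ gives
\begin{align*}
\|h^\kappa-h\|_{L^6(Q_T)}^6\le C\Big(\sup_t\|h^\kappa-h\|\Big)^4\int_0^T\|D_x^2(h^\kappa-h)\|^2dt+CT\Big(\sup_t\|h^\kappa-h\|\Big)^6 .
\end{align*}
The integral is bounded uniformly by Lemma \ref{entropy} and \eqref{5.1}, and the supremum tends to $0$ by \eqref{5.3}. This is the one place the argument differs from Section 2: there strong $H^2$ convergence was available, whereas here only boundedness in $L^2(0,T;H^2)$ is, and boundedness suffices because of the small $L^\infty_tL^2_x$ factor. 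Then
\[
\big||h^\kappa|_\kappa-|h|\big|\le\big||h^\kappa|_\kappa-|h^\kappa|\big|+|h^\kappa-h|\le\kappa+|h^\kappa-h|,
\]
since $\sqrt{a^2+\kappa^2}-a\le\kappa$, and this gives \eqref{kappa6}.

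Finally, \eqref{5.4} and \eqref{5.5} follow from the algebraic identities $|a^3-b^3|\le|a-b|(a^2+ab+b^2)$ and $|a^2-b^2|=|a-b|(a+b)$ with $a=|h^\kappa|_\kappa$ and $b=|h|$, combined with Hölder's inequality. For the cubic case, $\|a^3-b^3\|_{L^2}\le C\|a-b\|_{L^6}(\|a\|_{L^6}^2+\|b\|_{L^6}^2)$, using exponents $\frac16+\frac13=\frac12$. For the quadratic case, $\|a^2-b^2\|_{L^3}\le\|a-b\|_{L^6}\|a+b\|_{L^6}$. Both right-hand sides tend to zero because of \eqref{kappa6} and the uniform $L^6$ bounds. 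I expect the main obstacle to be the compactness into $C^\alpha$ and, in particular, justifying the a.e.-in-time convergence \eqref{5.2b}, which requires extracting a subsequence from the $L^2(0,T;C^\alpha)$ convergence; everything else is routine.
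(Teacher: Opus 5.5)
Your proposal is correct and follows essentially the same route as the paper. It uses the same two applications of the Aubin--Lions lemma (with $H^2(\Omega)\subset\subset C^\alpha(\bar\Omega)$ and $H^1(\Omega)\subset\subset L^2(\Omega)$), weak/weak-$*$ compactness with lower semicontinuity for \eqref{5.1}, the Gagliardo--Nirenberg inequality \eqref{nirenberg66} combining the uniform $L^2(Q_T)$ bound on $D_x^2 h^\kappa$ with the $C([0,T];L^2(\Omega))$ convergence, and finally $\bigl||h^\kappa|_\kappa-|h|\bigr|\le |h^\kappa-h|+\kappa$ together with H\"older for \eqref{kappa6}--\eqref{5.5}.
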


\begin{proof}
	Applying Lemma \ref{aubin} with $p_0=2$, $p_1=\frac{4}{3}$ and spaces
	\begin{align*}
		B_0=H^2(\Omega),\quad B=C^{\alpha}(\bar{\Omega}),\quad B_1=W^{-1,\frac{4}{3}}(\Omega)
	\end{align*}
	with $0<\alpha<1$. Since $B_0$ is compactly embedded into $B$, the uniform bounds \eqref{4.3}, \eqref{D2}, and \eqref{lemma4.2} imply that there exists a subsequence, still denoted by $h^\kappa$, and a function $h\in L^2(0,T;C^\alpha(\bar{\Omega}))$, such that $h^\kappa$ converges to $h$ strongly in $L^2(0,T;C^\alpha(\bar{\Omega}))$. By extracting a further subsequence if necessary, we obtain the pointwise-in-time convergence \eqref{5.2b}. This proves \eqref{5.2} and \eqref{5.2b}.
	
	\noindent Furthermore, choosing $p_0=\infty$, $p_1=\frac{4}{3}$, $B_0=H^1(\Omega)$, $B=L^2(\Omega)$, and $B_1=W^{-1,\frac{4}{3}}(\Omega)$, Lemma \ref{aubin} yields \eqref{5.3}.
	
	\noindent From the uniform bounds \eqref{4.3}, \eqref{D2}, and \eqref{lemma4.2}, we can extract weakly and weakly-$\ast$ converging subsequences. The weak lower semicontinuity of norms ensures that the limit functions inherit these bounds, which proves \eqref{5.1}. 
	
	\noindent The uniform $L^\infty(0,T; H^1(\Omega))$ bound implies that $\nabla h^\kappa$ is bounded in $L^2(Q_T)$, yielding \eqref{5.55}.
	
	\noindent Applying the Gagliardo-Nirenberg inequality \eqref{nirenberg66} to $h^\kappa - h$ and integrating over time, we obtain
	\begin{align}\label{666}
		\int_{0}^{T} \left\| h^\kappa-h \right\|_{L^6(\Omega)}^6 dt 
		&\leq C \int_{0}^{T} \left\| D_x^2(h^\kappa - h) \right\|^2 \left\| h^\kappa-h \right\|^4 dt + C \int_{0}^{T} \left\| h^\kappa-h \right\|^6 dt \nonumber\\
		&\leq C \left( \int_{0}^{T} \left\| D_x^2 h^\kappa - D_x^2 h \right\|^2 dt \right) \left(\sup_{t\in [0,T]} \left\| h^\kappa-h \right\|\right)^4 \nonumber\\
		&\quad + C T \left(\sup_{t\in [0,T]} \left\| h^\kappa-h \right\|\right)^6.
	\end{align}
	Since $D_x^2 h^\kappa$ is uniformly bounded in $L^2(Q_T)$ and $h^\kappa \to h$ strongly in $C([0,T]; L^2(\Omega))$ by \eqref{5.3}, the right-hand side of \eqref{666} tends to $0$ as $\kappa \to 0$. Thus, $h^\kappa \rightarrow h$ strongly in $L^6(Q_T)$.
	
	\noindent The algebraic inequality $\left| \sqrt{a^2+\kappa^2} - |b| \right| \leq |a-b| + \kappa$ yields
	\begin{align*}
		\left\| |h^\kappa|_{\kappa} - |h| \right\|_{L^6(Q_{T})} 
		&= \left\| \sqrt{|h^\kappa|^2+\kappa^2} - |h| \right\|_{L^6(Q_{T})} \nonumber\\
		&\leq \left\| h^\kappa - h \right\|_{L^6(Q_{T})} + \kappa |Q_T|^{\frac{1}{6}},
	\end{align*}
	where $|Q_T|$ denotes the Lebesgue measure of $Q_T$. Since $\left\| h^\kappa - h \right\|_{L^6(Q_{T})} \to 0$, we arrive at \eqref{kappa6}.
	
	\noindent Finally, algebraic factorization, H\"older's inequality and \eqref{kappa6} yield
	\begin{align*}
		&\left\| |h^\kappa|_{\kappa}^3-|h|^3 \right\|_{L^2(Q_{T})} \nonumber\\
		&\leq \left\| |h^\kappa|_\kappa - |h| \right\|_{L^6(Q_T)} \left\| |h^\kappa|_\kappa^2 + |h^\kappa|_\kappa |h| + |h|^2 \right\|_{L^3(Q_T)} \rightarrow 0
	\end{align*}
	and
	\begin{align*}
		\left\| |h^\kappa|_{\kappa}^2-|h|^2 \right\|_{L^3(Q_{T})} 
		&\leq \left\| |h^\kappa|_\kappa - |h| \right\|_{L^6(Q_T)} \left\| |h^\kappa|_\kappa + |h| \right\|_{L^6(Q_T)} \rightarrow 0,
	\end{align*}
	where the uniform bounds of the second factors in $L^3$ and $L^6$ are guaranteed by the $L^6(Q_T)$ bounds of $h^\kappa$ and $h$. This proves \eqref{5.4} and \eqref{5.5}, completing the proof of the lemma.
\end{proof}

The global third-order weak derivative of $h$ may not exist at $h=0$. Therefore, we establish a local convergence result for the nonlinear fluxes on the non-zero set of $h$. Define
\begin{align*}
	\mathcal{A}^h &:= \left\{(t,x) \in Q_{T} \mid |h(t,x)|>0\right\},
	\\
	\mathcal{A}^h(t) &:= \left\{x \in \Omega \mid (t,x) \in \mathcal{A}^h\right\}, \quad t \in (0,T).
\end{align*}
It follows from \eqref{5.2b} that $h(t)\in C^\alpha(\bar{\Omega})$ for almost all $t\in(0,{T})$, implying the spatial slice $\mathcal{A}^h(t)$ is an open set for almost all $t\in(0,T)$.

\begin{lema}\label{jubu}
	The limit function $h$ has the local weak $L^2$-derivative $\nabla\Delta h$  on $\mathcal{A}^h$ in the sense of Definition \ref{ruodaoshu}. Moreover, there exists a subsequence $h^\kappa$ such that
	\begin{align}\label{5.6}
		|h^\kappa|_{\kappa}^3\nabla \Delta h^\kappa\rightharpoonup \chi \quad \text{weakly in } L^{\frac{4}{3}}(Q_{T}),
	\end{align}
	where the limit function $\chi\in L^{\frac{4}{3}}(Q_{T})$ is given by
	\begin{align}\label{5.7}
		\chi(t,x)= \left \{
		\begin{aligned}
			&0, \quad &&\text{if } h(t,x)=0, \\
		    &|h|^3\nabla \Delta h, \quad &&\text{if } h(t,x)\neq 0.
		\end{aligned}\right.
	\end{align}
\end{lema}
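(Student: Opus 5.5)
By Corollary \ref{cor}, $|h^\kappa|_\kappa^3\nabla\Delta h^\kappa$ is bounded in $L^{\frac43}(Q_T)$. Since this space is reflexive, I will extract a subsequence converging weakly to some $\chi\in L^{\frac43}(Q_T)$. What remains is to identify $\chi$ separately on $\mathcal{A}^h$ and on $\{h=0\}$, and to construct the local weak derivative $\nabla\Delta h$ along the way.

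\emph{On $\mathcal{A}^h$.} By \eqref{5.2b} and the Sobolev embedding, a further subsequence gives $h^\kappa\to h$ a.e. in $Q_T$. For $\delta>0$ set $\mathcal{A}_\delta:=\{|h|>\delta\}$. Fix $\varepsilon=1/n$ and apply Egorov's Theorem \ref{shibian} on $Q_T$ to obtain $M_n$ with $|Q_T\setminus M_n|<1/n$ on which $h^\kappa\to h$ uniformly. Put $\mathcal{A}_n:=\mathcal{A}_{1/n}\cap M_n$. These sets exhaust $\mathcal{A}^h$ up to a null set.

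On $\mathcal{A}_n$, for $\kappa$ small we have $|h^\kappa|_\kappa\ge |h^\kappa|\ge \frac1{2n}$. Then \eqref{guji32} gives $\|\nabla\Delta h^\kappa\|_{L^2(\mathcal{A}_n)}\le C(2n)^{3/2}$, so a diagonal subsequence converges weakly in $L^2(\mathcal{A}_n)$ to some $g_n$. The $g_n$ are compatible on overlaps and define $g$ on $\mathcal{A}^h$ with $g|_{\mathcal{A}_n}\in L^2(\mathcal{A}_n)$, which is item 2 of Definition \ref{ruodaoshu}.

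To see that $g(t)=\nabla\Delta h(t)$ weakly on the open set $\mathcal{A}^h(t)$ (item 1), I test against $\phi\in C_0^\infty$ supported in a compact subset of $\mathcal{A}^h(t)$. Such a set lies in $\{|h(t)|>\delta\}$ for some $\delta$, by continuity of $h(t)$. I expect this step to be the main obstacle, because the Egorov sets are not product-structured and so do not directly give slice-wise information. I would first try to work with space-time tests $\eta(t)\phi(x)$ supported in sets where, by the uniform convergence \eqref{5.2b} in $C^\alpha$ for a.e. $t$, $|h^\kappa|\ge\delta/2$ holds on whole neighbourhoods. There the bound on $\nabla\Delta h^\kappa$ is local in $x$, and I pass to the limit in $-(\nabla\Delta h^\kappa,\phi)=(\Delta h^\kappa,\nabla\!\cdot\phi)$ using $D_x^2h^\kappa\rightharpoonup D_x^2h$ in $L^2(Q_T)$. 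A Fubini argument then recovers the statement for a.e. $t$ and gives $g(t)\in L^2_{\rm loc}(\mathcal{A}^h(t))$.

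Finally, on each $\mathcal{A}_n$ I write $|h^\kappa|_\kappa^3\nabla\Delta h^\kappa$ as the product of $|h^\kappa|_\kappa^3$, which converges to $|h|^3$ strongly in $L^2$ by \eqref{5.4}, and $\nabla\Delta h^\kappa$, which converges to $\nabla\Delta h$ weakly in $L^2$. Their product converges weakly in $L^1(\mathcal{A}_n)$ to $|h|^3\nabla\Delta h$, and uniqueness of weak limits gives $\chi=|h|^3\nabla\Delta h$ a.e. on $\bigcup_n\mathcal{A}_n$, hence a.e. on $\mathcal{A}^h$.

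\emph{On $\{h=0\}$.} Let $\psi\in L^4(Q_T)$ be supported in $Z_\delta:=\{|h|<\delta\}$. By H\"older's inequality and \eqref{guji32},
\begin{align*}
\left|\int_{Q_T}|h^\kappa|_\kappa^3\nabla\Delta h^\kappa\,\psi\right|\le \left\| |h^\kappa|_\kappa^{\frac32}\nabla\Delta h^\kappa\right\|_{L^2}\left\||h^\kappa|_\kappa^{\frac32}\psi\right\|_{L^2}\le C\left\||h^\kappa|_\kappa\right\|_{L^6(Z_\delta)}^{\frac32}\|\psi\|_{L^4}.
\end{align*}
By \eqref{kappa6} the right-hand side tends to $C\|h\|_{L^6(Z_\delta)}^{3/2}\|\psi\|_{L^4}\le C\delta^{3/2}|Q_T|^{1/4}\|\psi\|_{L^4}$. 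Taking $\psi=\mathrm{sgn}$-type test functions times $\mathbf 1_{\{h=0\}}$, which is allowed because $\{h=0\}\subset Z_\delta$ for every $\delta$, and letting $\delta\to0$ gives $\int\chi\psi=0$ for all $\psi$ supported on $\{h=0\}$. Hence $\chi=0$ a.e. there, which completes \eqref{5.7}.
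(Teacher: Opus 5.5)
\textbf{There is a genuine gap in item 1 of Definition \ref{ruodaoshu}, exactly where you expected it.} The rest of your argument is sound: item 2, the identification of $\chi$ on $\mathcal{A}^h$ (modulo item 1), and $\chi=0$ on $\{h=0\}$. The last is a duality variant of the paper's estimate \eqref{5.27} and works just as well. The remedy you sketch for item 1, however, does not close the gap.

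Because you apply Egorov's theorem on $Q_T$, the slices $\mathcal{A}_n(t)=\{|h(t)|>1/n\}\cap M_n(t)$ involve an arbitrary measurable set $M_n(t)$, which need not be open. This has two consequences:
\begin{enumerate}
\item You cannot integrate by parts on $\mathcal{A}_n(t)$.
\item A compact $K\subset\mathcal{A}^h(t)$ is in general not contained in any $\mathcal{A}_n(t)$. So you know neither $g(t)\in L^2(K)$ nor that $\nabla\Delta h^\kappa\rightharpoonup g$ near $\{t\}\times K$.
\end{enumerate}

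Your proposed fix invokes \eqref{5.2b}, but that is convergence for each fixed $t$. The threshold $\kappa_0(t)$ below which $|h^\kappa(t)|\ge\delta/2$ on $K$ depends on $t$. Hence for no time set $E$ of positive measure do you get a $\kappa$-uniform bound on $\|\nabla\Delta h^\kappa\|_{L^2(E\times K)}$. Without that bound you cannot pass to the limit in $(\nabla\Delta h^\kappa,\mathbf{1}_E\phi)$ and compare the result with $g$, and a Fubini argument does not supply this uniformity in $t$. Note also that $E$ must be a merely measurable set such as $\{t:\min_K|h(t)|>\delta\}$, not the support of a smooth $\eta(t)$, because $h$ is not continuous in time with values in $C^\alpha(\bar\Omega)$.

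\textbf{The missing idea is to apply Egorov's theorem in time only.} Apply it to the scalar functions $t\mapsto\|h^\kappa(t)-h(t)\|_{C^\alpha(\bar\Omega)}$, which tend to $0$ a.e. by \eqref{5.2b}. This yields nested $M_n\subset(0,T)$ with $|(0,T)\setminus M_n|\le 1/n$ and $h^\kappa\to h$ uniformly on $M_n\times\bar\Omega$. Set $\mathcal{A}_n:=\{|h|>1/n\}\cap(M_n\times\Omega)$. Then:
\begin{enumerate}
\item The slices $\mathcal{A}_n(t)=\{|h(t)|>1/n\}$ are open for $t\in M_n$.
\item Every compact $K\subset\mathcal{A}^h(t)$ lies in $\mathcal{A}_n(t)$ for large $n$, which gives $g(t)\in L^2_{\mathrm{loc}}(\mathcal{A}^h(t))$.
\item The identity $g_n(t)=\nabla\Delta h(t)$ on $\mathcal{A}_n(t)$ follows by integrating by parts against $\varphi$ with $\varphi(t)\in H_0^3(\mathcal{A}_n(t))$, taken from a countable dense family. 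One then uses $h^\kappa\to h$ in $L^2(Q_T)$ and Lebesgue differentiation in $t$.
\end{enumerate}
This is the paper's construction. Once you adopt it, your space-time Egorov step becomes superfluous and the rest of your argument goes through as written.
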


\begin{proof}
To show that $h$ admits a local weak derivative $\nabla\Delta h$ on $\mathcal{A}^h$, we first construct the exhausting sets $\{\mathcal{A}_n\}_{n\in \mathbb{N}}$ required by Definition \ref{ruodaoshu}. Recall from \eqref{5.2b} that the measurable function $t \mapsto \|h^\kappa(t)-h(t)\|_{C^\alpha(\bar{\Omega})}$ converges to zero for almost all $t\in(0,T)$. Theorem \ref{shibian} implies that there exists an increasing sequence of measurable subsets $\{M_n\}_{n\in \mathbb{N}} \subset (0,T)$ satisfying
\begin{align}\label{24.13}
	|(0,T) \setminus M_n| \leq \frac{1}{n}, \quad M_n \subset M_{n+1},
\end{align}
such that $\|h^\kappa(t)-h(t)\|_{C^\alpha(\bar{\Omega})}$ converges to zero uniformly with respect to $t\in M_n$. This implies that $h^\kappa$ converges to $h$ uniformly on $M_n\times \Omega$. We define
\begin{align*}
	\hat{\mathcal{A}}_n = \left\{(t,x)\in Q_{T} \mid |h(t,x)|>\frac{1}{n}\right\}, \quad \mathcal{A}_n = \hat{\mathcal{A}}_n \cap (M_n\times \Omega).
\end{align*}
By \eqref{24.13}, we have $\mathcal{A}_n \subset \mathcal{A}_{n+1}$ and
\begin{align*}
	\bigcup_{n=1}^{\infty} \mathcal{A}_n = \left(\bigcup_{n=1}^{\infty}\hat{\mathcal{A}}_n\right) \setminus \bigcap_{n=1}^{\infty}\left( \left((0,T) \setminus M_n\right)\times \Omega \right) = \mathcal{A}^h \setminus (N\times \Omega),
\end{align*}
where $N = \bigcap_{n=1}^{\infty}((0,T) \setminus M_n)$ has zero Lebesgue measure. This implies that the spatial slices satisfy
\begin{align*}
	\mathcal{A}^h(t) = \bigcup_{n=1}^\infty \mathcal{A}_n(t)
\end{align*}
for $t\in(0,T) \setminus N$. Since $h(t)$ is continuous for $t\in M_n$, the set $\hat{\mathcal{A}}_n(t)$ is open in $\Omega$. Therefore, $\mathcal{A}_n(t)$ and $\mathcal{A}^h(t)$ are open subsets of $\Omega$ for a.e. $t \in (0,T)$.

We next show that $\nabla\Delta h^\kappa$ is locally bounded on $\mathcal{A}_n$. The uniform convergence of $h^\kappa$ on $\mathcal{A}_n$ implies that there exists $\kappa_0 > 0$ such that for all $0<\kappa<\kappa_0$ and all $(t,x)\in \mathcal{A}_n$, we have $|h^\kappa| > \frac{1}{2n}$. Invoking \eqref{guji32}, we estimate
\begin{align*}
	C \geq \int\int_{Q_T} |h^\kappa|_{\kappa}^3|\nabla\Delta h^\kappa|^2 dxdt \geq \iint_{\mathcal{A}_n} |h^\kappa|^3|\nabla\Delta h^\kappa|^2 dxdt \geq \frac{1}{8n^3} \iint_{\mathcal{A}_n} |\nabla\Delta h^\kappa|^2 dxdt.
\end{align*}
This yields $\left\|\nabla\Delta h^\kappa\right\|_{L^2(\mathcal{A}_n)} \leq \sqrt{8Cn^3}$. By the Eberlein-Šmulian theorem, we can extract a subsequence such that
\begin{align}\label{5.13}
	\nabla\Delta h^\kappa \rightharpoonup g_n \quad \text{weakly in } L^2(\mathcal{A}_n).
\end{align}

We now verify that $g_n(t) = \nabla \Delta h(t)$ in the distributional sense on $\mathcal{A}_n(t)$ for a.e. $t$. Define the separable space
\begin{align*}
	L^2_{\mathcal{A}_n}(0,T;H_0^3(\Omega)) = \left\{v\in L^2(\mathcal{A}_n) \mid v(t)\in H_0^3(\mathcal{A}_n(t)) \text{ for a.e. } t\in (0,T)\right\},
\end{align*}
and let $\mathcal{K}$ be a countable dense subset thereof. For $\varphi\in \mathcal{K}$ and $(t, t+t_0) \subset (0,T)$, integration by parts yields
\begin{align*}
	\int_{t}^{t+t_0} \int_{\mathcal{A}_n(\tau)} g_n \varphi \,dxd\tau
	&= \lim_{\kappa \rightarrow 0} \int_{t}^{t+t_0} \int_{\mathcal{A}_n(\tau)} \nabla\Delta h^\kappa \varphi \,dxd\tau \nonumber\\
	&= - \lim_{\kappa \rightarrow 0} \int_{t}^{t+t_0} \int_{\mathcal{A}_n(\tau)} h^\kappa \nabla\Delta \varphi \,dxd\tau \nonumber\\
	&= - \int_{t}^{t+t_0} \int_{\mathcal{A}_n(\tau)} h \nabla\Delta \varphi \,dxd\tau.
\end{align*}
Dividing by $t_0$ and taking $t_0 \rightarrow 0$, the Lebesgue Differentiation Theorem asserts that for a.e. $t \in (0,T)$,
\begin{align}\label{5.15}
	\int_{\mathcal{A}_n(t)} g_n(t) \varphi(t) dx = - \int_{\mathcal{A}_n(t)} h(t) \nabla \Delta\varphi(t) dx.
\end{align}
By a standard density argument, \eqref{5.15} holds for all test functions in $H_0^3(\mathcal{A}_n(t))$ for a.e. $t \in M_n$. Thus $h(t) \in H^3(\mathcal{A}_n(t))$ and
\begin{align}\label{5.16}
	\nabla \Delta h(t) = g_n(t) \in L^2(\mathcal{A}_n(t)) \quad \text{for a.e. } t \in M_n.
\end{align}

We then patch these local derivatives together. Define $g$ on $\mathcal{A}^h$ by setting $g|_{\mathcal{A}_n} = g_n$. This is well-defined because the uniqueness of weak derivatives ensures $g_m|_{\mathcal{A}_n} = g_n$ for $m \geq n$. A standard compactness argument on any compact subset $K \subset \mathcal{A}^h(t)$ demonstrates that $K \subset \mathcal{A}_n(t)$ for sufficiently large $n$, which verifies $g(t) \in L^2_{\mathrm{loc}}(\mathcal{A}^h(t))$. Consequently, $g(t)$ is the third-order spatial derivative of $h(t)$ on $\mathcal{A}^h(t)$. Thus, $h$ possesses the local weak $L^2$-derivative $\nabla\Delta h$ on $\mathcal{A}^h$ according to Definition \ref{ruodaoshu}.

Turning to the nonlinear flux, the uniform bound \eqref{guji43} guarantees the existence of a weak limit $\chi \in L^{\frac{4}{3}}(Q_T)$ such that $|h^\kappa|_{\kappa}^3\nabla \Delta h^\kappa \rightharpoonup \chi$.
On $\mathcal{A}_n$, the uniform convergence of $|h^\kappa|_\kappa^3 \to |h|^3$ with \eqref{5.13} and \eqref{5.16} implies
\begin{align*}
	|h^\kappa|_{\kappa}^3\nabla\Delta h^\kappa \rightharpoonup |h|^3\nabla\Delta h \quad \text{weakly in } L^{\frac{4}{3}}(\mathcal{A}_n).
\end{align*}
By the uniqueness of weak limits, $\chi = |h|^3\nabla\Delta h$ a.e. on $\mathcal{A}_n$, and thus a.e. on $\mathcal{A}^h$.

To determine $\chi$ on the set $\{h=0\}$, we consider the set $\{|h| \leq \delta\}$ for any $\delta>0$. Applying H\"older's inequality and \eqref{guji32}, we estimate
\begin{align}\label{5.27}
	\iint_{\{|h|\leq\delta\}} |h^\kappa|_{\kappa}^4 |\nabla\Delta h^\kappa|^{\frac{4}{3}} dxdt
	&\leq \left( \iint_{\{|h|\leq\delta\}} |h^\kappa|_{\kappa}^6 dxdt \right)^{\frac{1}{3}} \left( \iint_{\{|h|\leq\delta\}} |h^\kappa|_{\kappa}^3 |\nabla\Delta h^\kappa|^2 dxdt \right)^{\frac{2}{3}} \nonumber\\
	&\leq C \left\| |h^\kappa|_{\kappa} \right\|_{L^6(\{|h|\leq\delta\})}^2 \nonumber\\
	&\leq C \left( \left\| |h^\kappa|_{\kappa}-|h| \right\|_{L^6(Q_{T})} + \left\| h \right\|_{L^6(\{|h|\leq\delta\})} \right)^2 \nonumber\\
	&\leq C \left( \left\| |h^\kappa|_{\kappa}-|h| \right\|_{L^6(Q_{T})} + \delta |Q_T|^{\frac{1}{6}} \right)^2.
\end{align}
Passing to the limit $\kappa \to 0$ in \eqref{5.27}, the strong convergence \eqref{kappa6} implies the first term on the right-hand side vanishes. Invoking the weak lower semicontinuity of the $L^{\frac{4}{3}}$-norm for $|h^\kappa|_{\kappa}^3\nabla\Delta h^\kappa \rightharpoonup \chi$, we obtain
\begin{align*}
	\|\chi\|_{L^{\frac{4}{3}}(\{h=0\})}^{\frac{4}{3}} \leq \|\chi\|_{L^{\frac{4}{3}}(\{|h|\leq \delta\})}^{\frac{4}{3}} \leq \liminf_{\kappa \to 0} \iint_{\{|h|\leq\delta\}} \left| |h^\kappa|_{\kappa}^3\nabla\Delta h^\kappa \right|^{\frac{4}{3}} dxdt \leq C \delta^2 |Q_T|^{\frac{1}{3}}.
\end{align*}
Taking $\delta \to 0$ yields $\|\chi\|_{L^{\frac{4}{3}}(\{h=0\})} = 0$, which completes the proof of the lemma.
\end{proof}

We now establish the strict positivity of the limit function $h$. The entropy estimate in Lemma $\ref{entropy}$ prevents the film thickness $h$ from becoming negative or touching zero on sets of positive measure.

\begin{lema}\label{zhengxing}
	There exists a constant $C>0$ independent of $\kappa$, such that for any initial data $h_0$ satisfying \eqref{jiashe}, the following properties hold:
	\begin{align}
		\label{zheng}
		& h(t,x) > 0 \quad \text{for a.e. } x \in \Omega, \text{ for all } t \in [0,T], \\
		\label{daoshu}
		& \left\| 1/h \right\|_{L^\infty(0,T;L^1(\Omega))} \leq C.
	\end{align}
\end{lema}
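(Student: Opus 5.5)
The plan is to pass to the limit in the uniform entropy bound \eqref{6.15}, namely $\sup_{t\in[0,T]}\int_\Omega G_\kappa(h^\kappa(t,x))\,dx\le C$ with $C$ independent of $\kappa$, by a double Fatou argument. Before that, I would record that $G_\kappa\ge 0$ and that $G_\kappa\to G_0$ pointwise. In fact the convergence is locally uniform in a suitable sense: for fixed $s_0>0$, $G_\kappa(s)\to G_0(s)$ uniformly for $s$ near $s_0$, while for $s\le 0$ we have $G_\kappa(s)\ge \sqrt{s^2+\kappa^2}/\kappa^2 - C\to\infty$. Hence, whenever $s_\kappa\to s$, we get $\liminf_{\kappa\to0}G_\kappa(s_\kappa)\ge G_0(s)$, with the value $+\infty$ when $s\le 0$. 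This is the key pointwise lower semicontinuity. Since $G_0$ has the form given in \eqref{g}, the statement $\int_\Omega G_0(h(t))\,dx\le C$ forces $h(t)>0$ a.e. and gives $\int_\Omega \frac{1}{2h(t)}dx\le C+\frac{|\Omega|}{A}$, which yields \eqref{daoshu}. The shift by the constant $\frac1A$ inside $G_0$ is harmless, because $G_0\ge0$ already; I would take $A$ so that this is clear.

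Fix $t\in[0,T]$. By \eqref{5.3}, $h^\kappa(t)\to h(t)$ in $L^2(\Omega)$ for every $t$, not only almost every $t$. So for each fixed $t$ I extract a further subsequence, depending on $t$, with $h^{\kappa_j}(t,x)\to h(t,x)$ for a.e. $x$. I then apply Fatou's lemma in $x$ to the nonnegative functions $G_{\kappa_j}(h^{\kappa_j}(t,\cdot))$ and use the lower semicontinuity above:
\[
\int_\Omega G_0(h(t,x))\,dx\le \int_\Omega \liminf_{j} G_{\kappa_j}(h^{\kappa_j}(t,x))\,dx\le \liminf_j\int_\Omega G_{\kappa_j}(h^{\kappa_j}(t,x))\,dx\le C.
\]
The bound \eqref{6.15} holds for every $t$, because the Gronwall bound is pointwise in $t$. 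It is inherited along any subsequence, and the choice of subsequence per $t$ is legitimate since the conclusion involves only the limit $h(t)$. From this the set $\{x: h(t,x)\le 0\}$ must have measure zero, because $G_0=\infty$ there. This proves \eqref{zheng} for all $t$, and taking the supremum over $t$ gives \eqref{daoshu}.

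The main obstacle is the pointwise liminf step near $s=0$, where $\kappa$ and $s$ go to zero together. I would handle it with explicit bounds. For $s>0$, write $G_\kappa(s)=\frac{\sqrt{s^2+\kappa^2}-s}{\kappa^2}+s\bigl(\frac1{\kappa^2}-\frac{A}{\kappa^2\sqrt{A^2+\kappa^2}}\bigr)-\frac1{\sqrt{A^2+\kappa^2}}$. The first term equals $1/(\sqrt{s^2+\kappa^2}+s)$, which converges to $1/(2s)$ and is bounded below by $1/(2\sqrt{s^2+\kappa^2})$. This bound blows up as both $s_\kappa$ and $\kappa$ go to zero. The middle coefficient stays bounded, and it converges to $1/(2A^2)$. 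For $s\le0$ the first term is at least $1/\sqrt{s^2+\kappa^2}$, which tends to $\infty$ if $s_\kappa\to 0$. If instead $s_\kappa\to s<0$, then $G_\kappa(s_\kappa)\ge (|s_\kappa|)/\kappa^2\cdot$const$\to\infty$. These estimates justify the liminf inequality in all cases.

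If the paper's earlier derivation of \eqref{6.15} only yields the bound for a.e. $t$, I would first obtain the conclusion for a.e. $t$. I would then extend it to all $t$ using $h\in C([0,T];L^2(\Omega))$: approximate $t$ by good times $t_k$, pass to an a.e. subsequence in $x$, and apply Fatou once more.
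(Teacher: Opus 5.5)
Your proposal is correct, and it reaches the conclusion by a more economical route than the paper.

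\textbf{The paper's route.} It proceeds in two Fatou steps. First it uses the a.e.-in-time convergence \eqref{5.2b} in $C^\alpha(\bar\Omega)$ to get $h^\kappa(t,\cdot)\to h(t,\cdot)$ uniformly in $x$ for $t$ in a full-measure set $\mathcal{T}$. It applies Fatou's lemma there to obtain $\int_\Omega G_0(h(t))\,dx\le C$ for $t\in\mathcal{T}$. It then extends this to every $t\in[0,T]$ by approximating $t$ with $t_n\in\mathcal{T}$, using $h\in C([0,T];L^2(\Omega))$, and applying Fatou a second time; that step needs only the lower semicontinuity of the fixed function $G_0$.

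\textbf{Your route.} You observe that \eqref{5.3} already gives $h^\kappa(t)\to h(t)$ in $L^2(\Omega)$ at every $t$. A $t$-dependent a.e.-convergent subsequence and a single Fatou step then give the bound at all times at once. The paper's two-step argument appears in your proposal only as a fallback, in case \eqref{6.15} were available merely for a.e.\ $t$.

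\textbf{What your version adds.} You give an explicit proof of the joint lower semicontinuity $\liminf_{\kappa} G_\kappa(s_\kappa)\ge G_0(s)$ whenever $s_\kappa\to s$. This covers the delicate regime where $s_\kappa$ and $\kappa$ tend to $0$ together, as well as the case $s\le 0$. The paper's first Fatou step relies on exactly this inequality without comment. Your decomposition settles it cleanly: split off the term $1/(\sqrt{s^2+\kappa^2}+s)$, bound it below by $1/(2\sqrt{s^2+\kappa^2})$ for $s>0$ and by $1/\sqrt{s^2+\kappa^2}$ for $s\le 0$, and note that the coefficient of $s$ stays bounded. You are also right that the paper's shift by $1/A$ is unnecessary, since $G_\kappa\ge 0$ and $G_0\ge 0$ (the latter by AM--GM).
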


\begin{proof}
	Recall from \eqref{6.15} that $\int_{\Omega} G_\kappa(h^\kappa(t,x)) dx \leq C$ for all $t \in [0,T]$. First, the pointwise-in-time strong convergence \eqref{5.2b} implies that there exists a subset $\mathcal{T} \subset [0,T]$ of full measure such that $h^\kappa(t,x) \to h(t,x)$ uniformly in $x$ for all $t \in \mathcal{T}$. Note that the limit entropy satisfies $G_0(s) \geq -1/A$ for all $s > 0$. Applying Fatou's lemma to the non-negative sequence $G_\kappa(h^\kappa) + 1/A \geq 0$, we deduce that for any $t \in \mathcal{T}$
	\begin{align}\label{Fatou_G}
		\int_{\Omega} G_0(h(t,x)) dx \leq \liminf_{\kappa \to 0} \int_{\Omega} G_\kappa(h^\kappa(t,x)) dx \leq C.
	\end{align}
	
	Next, we extend this bound to all $t \in [0,T]$ by utilizing \eqref{5.3}, which asserts $h \in C([0,T]; L^2(\Omega))$. For any $t \in [0,T]$, we can choose a sequence $\{t_n\}_{n=1}^\infty \subset \mathcal{T}$ such that $t_n \to t$. The temporal continuity implies that $h(t_n) \to h(t)$ strongly in $L^2(\Omega)$. Extracting a further subsequence yields $h(t_n, x) \to h(t, x)$ for a.e. $x \in \Omega$. Since $t_n \in \mathcal{T}$, we conclude from \eqref{Fatou_G} that $\int_{\Omega} G_0(h(t_n,x)) dx \leq C$ for every $n$. Applying Fatou's lemma a second time to the non-negative sequence $G_0(h(t_n,x)) + 1/A \geq 0$, we obtain
	\begin{align}\label{Fatou_t}
		\int_{\Omega} G_0(h(t,x)) dx \leq \liminf_{n \to \infty} \int_{\Omega} G_0(h(t_n,x)) dx \leq C \qquad \text{for all } t \in [0,T].
	\end{align}
	Since the spatial integral of $G_0(h(t,x))$ is finite for every $t \in [0,T]$, $G_0(h(t,x))$ is finite a.e. in $\Omega$. Definition \eqref{g} specifies $G_0(s) = \infty$ for all $s \leq 0$. Thus $h(t,x) > 0$ a.e. in $\Omega$ for all $t \in [0,T]$. This establishes \eqref{zheng}.
	
	Finally, Definition \eqref{g} yields $G_0(s) \geq \frac{1}{2s} - \frac{1}{A}$ for $s>0$. Substituting into \eqref{Fatou_t} yields
	\begin{align*}
		\int_{\Omega} \frac{1}{2h(t,x)} dx - \frac{|\Omega|}{A} \leq \int_{\Omega} G_0(h(t,x)) dx \leq C \qquad \text{for all } t \in [0,T].
	\end{align*}
	Rearranging this inequality, we obtain
	\begin{align*}
		\int_{\Omega} \frac{1}{h(t,x)} dx \leq 2C + \frac{2|\Omega|}{A} := C_2 \qquad \text{for all } t \in [0,T].
	\end{align*}
	Taking the essential supremum over $t \in (0,T)$ leads to \eqref{daoshu}. The proof of the lemma is complete.
\end{proof}

\noindent\textbf{Proof of Theorem $\textbf{\ref{jieguo}}$.}
Let $h^\kappa$ be the corresponding sequence of weak solutions to \eqref{a1}-\eqref{a3}.
In Lemma $\ref{shoulian1}$, there exists a subsequence, still denoted by $h^\kappa$, that converges to $h$. We shall show that $h$ is a weak solution of the problem \eqref{1}-\eqref{3} and satisfies \eqref{2.8}-\eqref{2.10}. To this end recalling \eqref{4.3}, \eqref{D2} and \eqref{lemma4.2} we obtain \eqref{2.8} and \eqref{2.9} for $h$. Relation \eqref{2.10} is implied by Lemma $\ref{jubu}$. Thus from \eqref{2}, \eqref{2.8} and the Gagliardo-Nirenberg inequality \eqref{nirenberg66} we get \eqref{d1}. The relations \eqref{111}-\eqref{112} follow from Lemma $\ref{zhengxing}$. To prove that \eqref{d2} holds, multiplying \eqref{a1} by a test function $\varphi\in C^\infty_{0}((-\infty,T)\times \mathbb{R})$ and integrating the resulting equation over $Q_{T}$, we obtain
\begin{align*}
	&(h^\kappa,\varphi_t)_{Q_{T}}+\alpha_{1} ( |h^\kappa|_{\kappa}^3 \nabla \Delta  h^\kappa,\nabla \varphi)_{\mathcal{A}^{h}} 
	\nonumber\\
	&=\alpha_{2} (|h^\kappa|_{\kappa}^3 \nabla h^\kappa, \nabla \varphi)_{Q_{T}} - \alpha_{3} (|h^\kappa|_{\kappa}^2 \nabla h^\kappa, \nabla \varphi)_{Q_{T}}-(h_0,\varphi(0))_{\Omega}.
\end{align*}
The equality $(\ref{d2})$ follows from this relation if we prove that
\begin{align} \label{5.29}
	(h^\kappa,\varphi_t)_{Q_{T}} \quad &\rightarrow \quad (h,\varphi_t)_{Q_{T}},
	\\ \label{5.30}
	( |h^\kappa|_{\kappa}^3 \nabla \Delta  h^\kappa,\nabla \varphi)_{Q_T} \quad &\rightarrow \quad ( h^3\nabla\Delta h,\nabla\varphi)_{\mathcal{A}^{h}} ,
	\\ \label{5.31}
	(|h^\kappa|_{\kappa}^3 \nabla h^\kappa, \nabla \varphi)_{Q_{T}} \quad &\rightarrow \quad (h^3 \nabla h, \nabla\varphi)_{Q_{T}},
	\\ \label{5.32}
	(|h^\kappa|_{\kappa}^2 \nabla h^\kappa, \nabla \varphi)_{Q_{T}} \quad &\rightarrow \quad (h^2 \nabla h, \nabla\varphi)_{Q_{T}},
\end{align}
for $\kappa\rightarrow 0$. Now, the relation \eqref{5.29} is a consequence of \eqref{5.3} and the relation \eqref{5.30} follows from Lemma $\ref{jubu}$. By \eqref{5.4}, \eqref{5.5} and \eqref{5.55} we assert that \eqref{5.31} and \eqref{5.32} hold. Therefore, together with Lemma $\ref{jubu}$, $h$ is a weak solution to the initial-boundary value problem \eqref{1}-\eqref{3} having the regularity properties stated in Theorem $\ref{jieguo}$. The proof of this theorem is complete.

\vskip0.5cm

\noindent\textbf{Data availability}: No data was used for the research described in the article.

\noindent\textbf{Confict of interest}: The authors declare that they have no confict of interest.

\end{document}